\newcommand{ \Rn} {\mathbb{R}^n}
\newcommand{ \Rm} {\mathbb{R}^m}
\newcommand{ \Ru} {\mathbb{R}}
\newcommand{\Om}{\Omega}
\newcommand{\mA}{\mathcal{A}}
\newcommand{\scu}{\longrightarrow}
\newcommand{\lpl}[1]{L^{p}_{loc}(#1)}
\newcommand{\anso}[1]{W^{1,p}_X(#1)}
\newcommand{\ansol}[1]{W^{1,p}_{X,loc}(#1)}
\newcommand{\cl}[1]{\overline{#1}}
\newcommand{\lul}{L^1_{loc}(U)}
\newcommand{\ch}{\overline{co}}
\newcommand{\sub}{\partial_{X,N}}
\newcommand{\winf}[1]{W^{1,\infty}(#1)}
\newcommand{\winfx}[1]{W_X^{1,\infty}(#1)}
\newtheorem{thm}{Theorem}[section]
\newtheorem{prop}[thm]{Proposition}
\newtheorem{lem}[thm]{Lemma}
\theoremstyle{definition}
\newcommand{\average}{{\mathchoice {\kern1ex\vcenter{\hrule height.4pt
				width 6pt
				depth0pt} \kern-9.7pt} {\kern1ex\vcenter{\hrule height.4pt width 4.3pt
				depth0pt}
			\kern-7pt} {} {} }}
\newcommand{\ave}{\average\int}
\DeclareMathOperator{\Lip}{Lip}
\DeclareMathOperator{\diver}{div}
\DeclareMathOperator{\diverx}{div_X}   
\DeclareMathOperator{\spann}{span} 
\DeclareMathOperator{\lie}{Lie} 
\title[The Aronsson Equation for Absolute Minimizers of Supremal Functionals]{The Aronsson Equation for Absolute Minimizers of Supremal Functionals in Carnot-Carathéodory Spaces}
\author{Andrea Pinamonti}
\address{Department of Mathematics, University of Trento, Via Sommarive 14, 38123 Povo (Trento), Italy}
\email{andrea.pinamonti@unitn.it}
\author{Simone Verzellesi}
\address{Department of Mathematics, University of Trento, Via Sommarive 14, 38123 Povo (Trento), Italy}
\email{simone.verzellesi@unitn.it}
\author{Changyou Wang}
\address{Department of Mathematics, Purdue University 150 N. University Street
	West Lafayette, IN 47907-2067}
\email{wang2482@purdue.edu}
\numberwithin{equation}{section}
\begin{document}
	\maketitle
	\begin{abstract}
		Given a $C^2$ family of vector fields $X_1,\ldots,X_m$ which induces a continuous Carnot-Carath\'eodory distance, we show that any absolute minimizer of a supremal functional defined by a $C^2$ quasiconvex Hamiltonian $f(x,z,p)$, allowing $z$-variable dependence, is a viscosity solution to the Aronsson equation
		\begin{equation*}
			-\langle X(f(x,u(x),Xu(x))), D_pf(x,u(x),Xu(x))\rangle=0.
		\end{equation*}
		
	\end{abstract}
	\section{Introduction}
	The study of variational problems in $L^\infty$ is very often a good starting point to set up problems coming both from theoretic issues and from real applications. The earliest works in this direction are due to Aronsson (\cite{aronsson1, aronsson2}). In these seminal papers, the author studied the connection between Lipschitz extension problems and PDEs, introducing the notion of \emph{absolute minimizing Lipschitz extension (AMLE)} and showing that a $C^2$ function is an AMLE if and only if it satisfies the \emph{infinity Laplace equation}
	\begin{equation}\label{infinity}
		-\sum_{i,j=1}^n\frac{\partial u}{\partial x_i}\frac{\partial u}{\partial x_j}\frac{\partial^2 u}{\partial x_i\partial x_j}=0.
	\end{equation}
	Anyway, as Aronsson observed (\cite{aronsson3}), there are examples of AMLE which are not $C^2$, and thus solving equation \eqref{infinity} only in a formal sense. The problem was solved by Jensen. In the celebrated paper \cite{jensen1}, the author exploited the machinery of viscosity solutions introduced by Crandall and Lions in \cite{crandalllions} (cf. also \cite{visco} for an exaustive account on the topic), and showed that being an AMLE is equivalent to being a viscosity solution to \eqref{infinity}. Moreover, he showed that viscosity solutions to problem \eqref{infinity} are unique, provided a Dirichlet boundary datum is assigned.\\
	One step further was made by Barron, Jensen and Wang (\cite{bjw}), who started the study of $L^\infty$ variational functionals $F$ which are usually known as \emph{supremal functionals}, that is
	\begin{equation*}
		F(u,V):=\|f(x,u(x),Du(x))\|_{L^\infty(V)}\qquad u\in\winf{U}, V\in\mA.
	\end{equation*}
	where throughout the paper $U$ is an open and connected subset of $\Rn$, $\mA$ is the class of all open subsets of $U$ and $f$ is a suitable continuous non-negative function. In particular, they generalized the notion of AMLE to the one of \emph{absolute minimizer} of the functional $F$, that is a function $u\in\winf{U}$ such that
	\begin{equation*}
		F(u,V)\leq F(v,V)
	\end{equation*}
	for any $V\Subset U$ and for any $v\in\winf{V}$ with $v|_{\partial V}=u|_{\partial V}$.
	The authors of \cite{bjw} showed that any absolute minimizer of $F$ is a solution, in the viscosity sense, of the so-called \emph{Aronsson equation}
	\begin{equation*}
		-\sum_{i=1}^n\frac{\partial}{\partial x_i}(f(x,u(x),Du(x)))\frac{\partial f}{\partial p_i} (x,u(x),Du(x))=0,
	\end{equation*}
	provided that, among the other things, $f$ is $C^2$ and $p\mapsto f(x,s,p)$ is \emph{strictly quasiconvex}, where
	we call a function $g:\Rn\scu\Ru$ (strictly) quasiconvex whenever 
	\begin{equation*}
		g(tp_1+(1-t)p_2)\leq\,(<)\,\max\{g(p_1),g(p_2)\}
	\end{equation*}
	for any $p_1,p_2\in\Rm$ with $p_1\not=p_2$ and $t\in(0,1)$.
	This result generalizes the previous ones, in the sense that, in the particular case in which $f(p)=|p|^2$, the notion of absolute minimizer reduces to the one of AMLE and the Aronsson equation becomes the infinity Laplace equation. Many improvements of the results in \cite{bjw} have been achieved by Crandall (\cite{crandall}), both weakening some assumptions and exploiting a concise and elegant proof, and by Crandall, Wang and Yu (\cite{cyw}), dealing with the more natural assumption of $C^1$ Hamiltonians.\\
	More recently, Bieske and Capogna (\cite{B,BC}) studied the derivation of the Aronsson equation, and the question of uniqueness of absolute minimizers, in the setting of Carnot groups and for the case $f(p)=|p|^2$. Later, Wang (\cite{wang}) moved the focus on the possibility to extend the previous results to more general frameworks, and started the study of supremal functionals defined in the setting of \emph{Carnot-Carathéodory spaces}. We stress that this point of view is pretty general and encompasses, among other things, the Euclidean setting and many interesting sub-Riemannian manifolds. On the other hand its rich analytical structure allows to study many interesting problems in great generality (see for example \cite{EPV,MSC,MPSC,MPSC2} and references therein).

	In order to better introduce this issue we recall some terminology and some well known facts.\\
	Given a family $X=(X_1\ldots,X_m)$ of locally Lipschitz vector fields defined on $U$, we say that an absolutely continuous curve $\gamma:[0,\delta]\scu U$ is \emph{horizontal} when there are measurable functions $a_1(t),\ldots,a_m(t)$ with
	\begin{equation*}
		\Dot{\gamma}(t)=\sum_{j=1}^ma_j(t)X_j(\gamma(t))\qquad\text{ for a.e. }t\in[0,\delta],
	\end{equation*}
	and we say that it is \emph{subunit} whenever it is horizontal with $\sum_{j=1}^m a_j^2(t)\leq 1$ for a.e. $t\in[0,\delta]$.
	Moreover, we define the \emph{Carnot-Carathéodory distance} on $U$ as
	\begin{equation*}
		d_X(x,y):=\inf\left\{\int_0^1|\Dot{\gamma}(t)|dt\,:\,\gamma:[0,1]\scu U\text{ is subunit, $\gamma(0)=x$ and $\gamma(1)=y$}\right\}.
	\end{equation*}
	If $d_X$ is a (finite) distance on $U$, we say that $(U,d_X)$ is a \emph{Carnot-Carathéodory space}.
	Moreover, we denote by $C(x)$ the $m\times n$ matrix defined as \[C(x):=[c_{j,i}(x)]_{\substack{{i=1,\dots,n}\\{j=1,\dots,
				m}}},\]
	where for each $j=1\ldots,m$ we have $X_j:=\sum_{i=1}^n c_{j,i}\frac{\partial}{\partial x_i}$.
	If $u\in\lul$, we define the distributional $X$-gradient (or \emph{horizontal gradient}) of $u$ as 
	\begin{equation*}
		\langle Xu,\varphi\rangle:=-\int_U u \diver (\varphi\cdot C(x))dx\qquad\text{ for any }\varphi\in C^\infty_c(U,\Rm).
	\end{equation*}
	Finally, if $p\in[1,+\infty]$, we define the \emph{horizontal Sobolev spaces} as
	$$\anso{U}:=\{u\in L^p(U)\,:\,Xu\in L^p(U,\Rm)\}$$
	and
	$$\ansol{U}:=\{u\in\lpl{U}\,:\,u|_{V}\in\anso{V},\quad\forall\, V\Subset U\}.$$
	In \cite{wang} the author adapted in the obvious way the notion of absolute minimizer to this framework, and showed, under mild assumptions on the generating family of vector fields, that any absolute minimizer of the supremal functional defined by
	\begin{equation*}
		F(u,V):=\|f(x,Xu(x))\|_{L^\infty(V)}
	\end{equation*}
	is a viscosity solution of the equation
	\begin{equation*}
		-\sum_{i=1}^m X_i(f(x,Xu(x)))\frac{\partial f}{\partial p_i} (x,Xu(x))=0,
	\end{equation*}
	provided that $p\mapsto f(x,p)$ is quasiconvex, that $f$ is homogeneous of degree $\alpha\geq1$ and that $D_p f(0,0)=0$. Finally, Wang and Yu (\cite{wy}) improved the previous result by requiring only $C^1$ regularity for $f$ and dropping the assumption that $D_pf(0,0)=0$ (see also \cite{dragoni} for some more specific results for the case $f(p)=|p|^2$). However, neither \cite{wang} nor \cite{wy} studied the problem for Hamiltonian functions $f$ that allow
	$z$-variable dependence. \\
	
	In the present paper we generalize the results in \cite{crandall} and \cite{wang}, showing that any absolute minimizer of the functional
	\begin{equation*}
		F(u,V):=\|f(x,u(x),Xu(x))\|_{L^\infty(V)}
	\end{equation*}
	is a viscosity solution to the Aronsson equation
	\begin{equation*}
		-\sum_{i=1}^m X_i(f(x,u(x),Xu(x)))\frac{\partial f}{\partial p_i} (x,u(x),Xu(x))=0,
	\end{equation*}
	provided that the following conditions hold.
	\begin{itemize}
		\item [$(X1)$] $d_X$ is a distance on $U$, and it is continuous with respect to the Euclidean topology.
		\item[$(X2)$] $X_i\in C^2(U,\Rn)$ for any $i=1,\ldots,m.$
		\item[$(f1)$] $f\in C^2(\Om\times\Ru\times\Rm,[0,\infty))$.
		\item[$(f2)$] $p\mapsto f(x,s,p)$ is quasiconvex for any $x\in\Om$ and for any $s\in\Ru$.
	\end{itemize}
	The strategy of our proof, strongly inspired by \cite{crandall}, is divided into five steps.
	\begin{itemize}
		\item [\textbf{Step 1.}] Arguing by contradiction, we assume that there is an absolute minimizer which fails to be a viscosity subsolution to the Aronsson equation. Therefore, without loss of generality, we assume that there exists a function $\phi\in C^2(U)$, which touches $u$ from above in $0$, such that 
		\begin{equation*}
			-\sum_{i=1}^m X_i(f(0,\phi(0),X\phi(0)))\frac{\partial f}{\partial p_i} (0,\phi(0),X\phi(0))>0.
		\end{equation*}
		\item [\textbf{Step 2.}] Exploiting ideas from \cite{crandall,wang}, we build a family $(\Psi_\varepsilon)_\varepsilon$ of classical solutions to the Hamilton-Jacobi equation
		$$f(x,\Psi_\varepsilon(x),X\Psi_\varepsilon(x))=f(0,\phi(0)-\varepsilon,X\phi(0))$$
		for which we have some continuity properties.
		\item [\textbf{Step 3.}] We find and open set $\mathcal{N}_\varepsilon$ which allows to consider $\Psi_\varepsilon$ as a competitor in the definition of absolute minimizer.
		\item [\textbf{Step 4.}] By an appropriate change of variables we reduce to the case in which $s\mapsto f(x,s,p)$
		is non-decreasing in a neighborhood of $(0,\phi(0),X\phi(0))$.
		\item[\textbf{Step 5.}] We show the solvability of a suitable system of ODEs to get a family of $C^1$ curves $(\gamma_\varepsilon)_\varepsilon$, and we show that there is a choice among such curves which allows to reach a contradiction.
	\end{itemize}
	In particular, the last step involves some preliminary results about differentiability in Carnot-Carathéodory spaces which we tackled, inspired again by \cite{crandall}, by suitably adapting the notion of \emph{subdifferential} introduced in \cite{clarke}.\\
	From one hand, our result generalizes \cite{crandall} to the more general setting of Carnot-Carathéodory spaces. Moreover, differently from \cite{wang}, we allow also the function dependence of the hamiltonian and we drop the requirement $D_pf(0,0)=0$. Finally, the results in \cite{wy}, apart from not allowing the function dependence of the hamiltonian, are achieved under the H\"ormander condition, which is known to be stronger than $(X1)$. On the other hand our techniques strongly relies on the $C^2$ regularity of the hamiltonian, which is on the contrary weakened in \cite{wy}.\\
	The paper is organized as follows. In Section 2 we recall some preliminaries about Carnot Carathéodory spaces, viscosity solutions, absolute minimizers and quasiconvex functions, we introduce the aforementioned notion of subdifferential and we shows some useful properties of differentiability along horizontal curves. In Section 3 we state and prove the main result of this paper.

	\section{Preliminaries}
	\subsection{Notation}
	Unless otherwise specified, we let $m,n\in\mathbb{N}\setminus\{0\}$ with $m\leq n$, we denote by $U$ an open and connected subset of $\Rn$ and by $\mA$ the class of all open subsets of $U$. Given two open sets $A$ and $B$, we write $A\Subset B$ whenever $\overline{A}\subseteq B$. If $E\subseteq \Rn$, we set $\ch E$ to be the closure of 
	\begin{equation*}
		co E:=\bigcap\{C\,:\,C\text{ is convex and }E\subseteq C\}.
	\end{equation*}
	It is easy to see that $co E$ is convex and that $\ch E$ is closed and convex. Moreover we set $\Lambda_n:=\{(\lambda_1,\ldots,\lambda_n)\,:\,0\leq\lambda_j\leq 1,\,\sum_{j=1}^n\lambda_j=1\}$.
	For any $u,v\in\Rn$, we denote by $\langle u,v\rangle $ the Euclidean scalar product, and by $|v|$ the induced norm. We let $S^m$ be the class of all $m\times m$ symmetric matrices with real coefficients. Moreover, if $A$ is a $p\times q$ matrix and $B$ is a $q\times r$ matrix, we let $A\cdot B$ be the usual matrix product. We denote by $\mathcal{L}^n$ the restriction to $U$ of the $n$-th dimensional Lebesgue measure, and for any set $E\subseteq U$ we write $|E|:=\mathcal{L}^n(E)$.
	Given $x\in\Rn$ and $R>0$ we let $B_R(x):=\{y\in\Rn\,:\,|x-y|<R\}$.
	If we have a function $g\in\lul$ and $x\in U$ is a Lebesgue point of $g$, when we write $g(x)$ we always mean that $$g(x)=\lim_{r\to 0^+}\ave_{B_r(0)}g(y)dy.$$
	If $f(x,s,p)$ is a regular function defined on $U\times\Ru\times\Rm$, we denote by $D_xf=(D_{x_1}f,\ldots,D_{x_n}f)$, $D_sf$ and $D_pf=(D_{p_1}f,\ldots,D_{p_m}f)$ the partial gradients of $f$ with respect to the variables $x,s$ and $p$ respectively. In general we mean gradients as row vectors.
	\subsection{Carnot-Carathéodory spaces}
	Assume that we have a family $X_1,\ldots,X_m$ of locally Lipschitz vector fields defined on $U$.
	Given $k\geq 1$, we define $C^k_{X}(U)$ by
	\begin{equation*}
		C^k_X(U):=\{u\in C(U)\,:\,\exists X_{i_1}\cdots X_{i_s}u\in C(U)\text{ for any $(i_1,\ldots,i_s)\in\{1,\ldots,m\}^s$ and $1\leq s\leq k$} \}.
	\end{equation*}
	Therefore, whenever we have a function $u\in C^2_X(U)$, we can define its \emph{horizontal Hessian} $X^2u\in C(U,S^m)$ as
	\begin{equation*}
		X^2u(x)_{ij}:=\frac{X_iX_ju(x)+X_jX_iu(x)}{2}
	\end{equation*}
	for any $x\in U$ and $i,j=1,\ldots,m$.
	When in addition $(U,d_X)$ is a Carnot-Carathéodory space, we can define the \emph{Horizontal Lipschitz space} as
	\begin{equation*}
		\Lip_X(U):=\left\{u:U\scu\overline{\Ru}\,:\,\sup_{x\neq y}\frac{u(x)-u(y)}{d_X(x,y)}<+\infty\right\}.
	\end{equation*}
	It is well known, and we refer to \cite{franchiserapserra}, that the equality
	\begin{equation}\label{wl}
		\winfx{U}=\Lip_X(U)
	\end{equation}
	holds.
	In this paper, unless otherwise specified, we always assume that 
	\begin{itemize}
		\item [$(X1)$] $d_X$ is a distance on $U$, and it is continuous with respect to the Euclidean topology.
	\end{itemize}
	In particular, we point out that, if $(X1)$ holds, then each function $u\in W^{1,\infty}_{X,loc}(U)$ admits a continuous representative, that is
	\begin{equation}\label{continuita}
		W^{1,\infty}_{X,loc}(U)\subseteq C(U).
	\end{equation}
	Indeed, if $u\in W^{1,\infty}_{X,loc}(U)$ and $x,y\in U$, then, if $x,y\in K\Subset\Om$ and thanks to \eqref{wl}, it holds that
	\begin{equation*}
		|u(x)-u(y)|=d_X(x,y)\frac{|u(x)-u(y)|}{d_X(x,y)}\leq d_X(x,y)\sup_{z\neq w\in K}\frac{u(z)-u(w)}{d_X(z,w)},
	\end{equation*}
	and the right side goes to zero as $x\to y$ in virtue of $(X1)$.
	Therefore, in the following we identify $u\in W^{1,\infty}_{X,loc}(U)$ with its continuous representative.\\
	As it is well known, assumption $(X1)$ is quit mild in this framework, since it includes many relevant situations. 
	Just to mention the most famous instance, we recall that a family $X_1,\ldots,X_m$ satisfies the \emph{H\"ormander condition} whenever each $X_j$ is a smooth vector fields and it holds that
	$$\spann\{\lie(X_1(x),\ldots,X_m(x))\}=\Rn\qquad \text{ for any }x\in U,$$
	where $\lie(X_1(x),\ldots,X_m(x))$ denotes the Lie algenra generated by $X_1(x),\ldots,X_m(x)$.
	From \cite{nagel,gromov} we know the following result.
	\begin{prop}
		Assume that $X$ satisfies the H\"ormander condition. Then the following properties hold.
		\begin{itemize}
			\item[(i)] $(U,d_X)$ is a Carnot-Carathéodory space.
			\item[(i)] For any compact set $K\subseteq U$ there exists a positive constant $C_K$ such that 
			\begin{equation*}
				C_K^{-1}|x-y|\leq d_X(x,y)\leq C_K|x-y|^{\frac{1}{r}}\qquad\text{ for any }x,y\in K,
			\end{equation*}
			being $r$ the nilpotency step of $\lie(X_1,\ldots,X_m)$.
		\end{itemize}
	\end{prop}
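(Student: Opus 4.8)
The statement combines the Chow--Rashevskii connectivity theorem with the Nagel--Stein--Wainger ball--box estimate, so in practice one simply cites \cite{nagel,gromov}; the plan of a self-contained argument would be as follows.

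For (i), the point to establish is that $d_X$ is finite, i.e.\ that any two points of $U$ can be joined by a subunit curve inside $U$. I would fix $x_0\in U$, let $\mathcal{O}_{x_0}\subseteq U$ be the set of endpoints of horizontal curves issuing from $x_0$, and prove that $\mathcal{O}_{x_0}$ is open by means of the \emph{commutator trick}: for $C^1$ vector fields $Y,Z$ with local flows $\phi^Y_t,\phi^Z_t$, the composition $\phi^{Z}_{-\sqrt t}\circ\phi^{Y}_{-\sqrt t}\circ\phi^{Z}_{\sqrt t}\circ\phi^{Y}_{\sqrt t}$ sends $x$ to $x+t\,[Y,Z](x)+o(t)$ as $t\to 0^+$. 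Iterating this, for every iterated bracket $W=[X_{i_1},[X_{i_2},[\dots,X_{i_k}]\dots]]$ one obtains a composition of flows of the $X_j$, run for times of order $s^{1/k}$, which moves $x$ in the direction $W(x)$ to leading order in $s$; since the H\"ormander condition makes the vectors $W(x_0)$ span $\Rn$, choosing $n$ of them and combining the corresponding motions gives a $C^1$ map $(s_1,\dots,s_n)\mapsto\Theta(s_1,\dots,s_n)\in U$ with $\Theta(0)=x_0$ and $d\Theta(0)$ invertible, so by the inverse function theorem $\mathcal{O}_{x_0}$ contains a Euclidean ball around $x_0$. As the base point is arbitrary and joinability by horizontal curves is an equivalence relation, the sets $\mathcal{O}_x$ are open and partition $U$, and connectedness of $U$ forces a single class, hence $d_X$ is finite. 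Symmetry, the triangle inequality and nonnegativity of $d_X$ are then immediate from reversing and concatenating subunit curves, nondegeneracy follows from the lower bound in (ii), and the two bounds in (ii) show that $d_X$ induces the Euclidean topology; hence $(U,d_X)$ is a Carnot--Carath\'eodory space and $d_X$ is Euclidean-continuous.

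For the lower bound in (ii), I would fix a compact $K'$ with $K\subseteq\mathrm{int}(K')\subseteq K'\subseteq U$ and set $L:=\sqrt m\,\max_j\sup_{K'}|X_j|$; if $x,y\in K$ and $\gamma$ is a subunit curve from $x$ to $y$ of length at most $d_X(x,y)+\varepsilon$ that remains in $K'$ (automatic once $d_X(x,y)$ is small relative to $\mathrm{dist}(K,\partial K')$), then $|\dot\gamma(t)|=\big|\sum_j a_j(t)X_j(\gamma(t))\big|\le L$ for a.e.\ $t$, so $|x-y|\le L\,(d_X(x,y)+\varepsilon)$; letting $\varepsilon\to 0$ and enlarging the constant to absorb the ``far'' pairs yields $C_K^{-1}|x-y|\le d_X(x,y)$. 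The upper bound is the quantitative heart of the statement: using the flow-composition motions of (i) \emph{with uniform quantitative control}, by compactness one selects iterated brackets $W_1,\dots,W_n$ of degrees $d_1,\dots,d_n\le r$ which at every point of $K$ form a basis of $\Rn$ with uniformly bounded volume distortion, and assembles the associated motions into a map $\Theta_x(t_1,\dots,t_n)$ that is a diffeomorphism from a fixed neighborhood of $0$ onto a neighborhood of $x$, satisfies $|\Theta_x(t)-x|\ge c|t|$, and joins $x$ to $\Theta_x(t)$ by a subunit curve of length at most $C\sum_i|t_i|^{1/d_i}\le C'|t|^{1/r}$ for $|t|\le 1$, with $c,C,C'$ depending only on $K$; this gives $d_X(x,y)\le C'|x-y|^{1/r}$ for $x,y\in K$ with $|x-y|$ small, and the general case follows by splitting the Euclidean segment from $x$ to $y$ into finitely many short pieces inside a slightly larger compact set and summing.

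The main obstacle is precisely this last, quantitative commutator analysis: realizing a degree-$k$ iterated Lie bracket, modulo higher-order terms, by a composition of the flows $\phi^{X_j}_t$ run for times $\sim t^{1/k}$, and doing so with simultaneous control of the sub-Riemannian length of the connecting curves and of the nondegeneracy of the coordinate map $\Theta_x$. This is the content of the Chow--Rashevskii and Nagel--Stein--Wainger theorems, and the bookkeeping is substantial, which is why one contents oneself with citing \cite{nagel,gromov}.
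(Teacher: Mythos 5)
Your proposal matches the paper exactly: the paper offers no proof at all, merely citing \cite{nagel,gromov} for this standard combination of the Chow--Rashevskii connectivity theorem and the Nagel--Stein--Wainger ball--box estimate. Your sketch of the commutator-trick argument and the quantitative ball--box reasoning is a correct outline of what those references establish, and your identification of the quantitative flow-composition bookkeeping as the genuine difficulty is accurate.
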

	Hence H\"ormander vector fields are examples of vector fields satisfying $(X1)$.

	\subsection{Subgradient in Carnot-Carathéodory spaces}
	When $u\in W_{X,\emph{loc}}^{1,\infty}(U)$ and $N\subseteq U$ is any Lebesgue-negligible set which contains all the non-Lebesgue points of $Xu$, we define the \emph{$(X,N)$-subgradient} of $u$ as
	\begin{equation*}
		\sub u(x):=\ch\{\lim_{n\to\infty}Xu(y_n)\,:\,y_n\to x,\,y_n\notin N\text{ and }\exists\lim_{n\to\infty}Xu(y_n)\}
	\end{equation*}
	for any $x\in U$. This notion is inspired by the classical subdifferential introduced in \cite{clarke}. Anyway, since our hypoteses are too general to ensure the validity of a Rademacher-type Theorem for functions in $\Lip_X(U)$ (cf. \cite{MSC}), these two objects enjoys different properties. Therefore the Euclidean $(X,N)$-subgradient, i.e. when $X=(\partial_1,\ldots,\partial_n)$, does not coincide in general with Clarke's subdifferential.
	
	\begin{prop}\label{equivalenza}
		Let $u$ and $N$ be as above. Then the following facts hold.
		\begin{itemize}
			\item[$(i)$] $\sub u(x)$ is a non-empty, convex, closed and bounded subset of $\Rm$ for any $x\in U$;
			\item [$(ii)$] for any $x\in U$
			\begin{equation*}
				\sub u(x)=\bigcap_{k=1}^\infty\ch \{Xu(y)\,:\,y\in B_{1/k}(x)\setminus N\};
			\end{equation*}
			\item[$(iii)$] if $u\in C^1_X(U)$, then 
			\begin{equation*}
				\sub u(x)=\{Xu(x)\}
			\end{equation*}
			for any $x\in U$.
		\end{itemize}
	\end{prop}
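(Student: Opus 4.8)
The plan is to localise around a fixed $x\in U$ and use only two facts: that $u\in W^{1,\infty}_{X,\mathrm{loc}}(U)$ forces $Xu$ to be essentially bounded near $x$, and that $N$ being $\mathcal{L}^n$-null implies $B_\rho(x)\setminus N\neq\emptyset$ (indeed dense) for every $\rho>0$. So first I would fix $r,M>0$ with $|Xu|\le M$ a.e.\ on $B_r(x)$; since $U\setminus N$ consists only of Lebesgue points of $Xu$, averaging gives $|Xu(y)|\le M$ for \emph{every} $y\in B_r(x)\setminus N$ as well. Part $(i)$ is then quick: choosing $y_k\in B_{1/k}(x)\setminus N$ produces $y_k\to x$ with $(Xu(y_k))_k$ bounded in $\Rm$, so a subsequence converges and its limit lies in the set that generates $\sub u(x)$, whence $\sub u(x)\neq\emptyset$; every generator lies in $\overline{B_M(0)}$, a convex set, so $\sub u(x)\subseteq\overline{B_M(0)}$ is bounded; and closedness and convexity are built into the $\ch$ in the definition.

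For $(ii)$ I would set $S_k:=\{Xu(y):y\in B_{1/k}(x)\setminus N\}$ for $1/k<r$, so that $\emptyset\neq S_{k+1}\subseteq S_k\subseteq\overline{B_M(0)}$. Recalling that in $\Rm$ the closed convex hull of a bounded set $S$ equals $\mathrm{co}\,\overline{S}$ (Carathéodory exhibits $\mathrm{co}\,\overline{S}$ as the image of a compact set under a continuous map, hence compact; the remaining inclusions are immediate), each $\ch S_k=\mathrm{co}\,\overline{S_k}$ is a non-empty compact convex set, and the family is decreasing, so the right-hand side $B:=\bigcap_k\ch S_k$ of $(ii)$ is non-empty, compact and convex. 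The inclusion $\sub u(x)\subseteq B$ is straightforward: if $\zeta=\lim_n Xu(y_n)$ with $y_n\to x$, $y_n\notin N$, then for every $k$ eventually $Xu(y_n)\in S_k$, so $\zeta\in\overline{S_k}\subseteq\ch S_k$; thus the generators of $\sub u(x)$ lie in the closed convex set $B$, and therefore so does $\ch$ of them.

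The reverse inclusion $B\subseteq\sub u(x)$ is the main obstacle. It rests on the fact that taking convex hulls commutes with decreasing intersections of compact sets: with $T_k:=\overline{S_k}$ one shows $\bigcap_k\mathrm{co}\,T_k=\mathrm{co}\bigcap_k T_k$. Given $\xi$ in the left-hand side, Carathéodory yields for each $k$ a representation $\xi=\sum_{i=1}^{m+1}\lambda_i^{(k)}t_i^{(k)}$ with $(\lambda_1^{(k)},\dots,\lambda_{m+1}^{(k)})\in\Lambda_{m+1}$ and $t_i^{(k)}\in T_k$; compactness of $\Lambda_{m+1}$ and of $T_1$ lets one extract a subsequence along which $\lambda^{(k)}\to\lambda$ and $t_i^{(k)}\to t_i$, and $t_i^{(k)}\in T_k\subseteq T_j$ for $k\ge j$ forces $t_i\in\bigcap_j T_j$, so $\xi=\sum_i\lambda_i t_i\in\mathrm{co}\bigcap_j T_j$ (the opposite inclusion is trivial). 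It then remains to identify $\bigcap_k T_k$ with the set generating $\sub u(x)$: if $\eta\in\bigcap_k\overline{S_k}$, picking $w_k\in B_{1/k}(x)\setminus N$ with $|Xu(w_k)-\eta|<1/k$ gives $w_k\to x$, $w_k\notin N$ and $Xu(w_k)\to\eta$, so $\eta$ is an admissible limit, and conversely every admissible limit lies in each $\overline{S_k}$ by the computation of the previous paragraph. Combining, $B=\mathrm{co}\bigcap_k T_k=\ch\{\text{admissible limits}\}=\sub u(x)$, which also yields an alternative proof of non-emptiness in $(i)$, a decreasing intersection of non-empty compacts being non-empty.

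Finally $(iii)$ follows from the definition alone: if $u\in C^1_X(U)$ then $Xu\in C(U,\Rm)$, so for any $y_n\to x$ with $y_n\notin N$ we get $Xu(y_n)\to Xu(x)$; since $B_{1/k}(x)\setminus N\neq\emptyset$ such sequences exist, and $Xu(x)$ is the only admissible limit. Hence the generating set equals $\{Xu(x)\}$ and $\sub u(x)=\ch\{Xu(x)\}=\{Xu(x)\}$.
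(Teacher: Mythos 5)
Your proposal is correct and takes essentially the same route as the paper: part $(i)$ via local boundedness of $Xu$ plus density of $U\setminus N$, part $(ii)$ with the forward inclusion done directly and the reverse one via Carath\'eodory representations with the uniform bound $m+1$ together with compactness extraction of the weights and vertices (the paper packages this as two appendix lemmas, $\bigcap_k\overline{A_k}\subseteq S$ and then $\bigcap_k\ch\overline{A_k}\subseteq\ch S$, whereas you phrase it as the single commutation identity $\bigcap_k \mathrm{co}\,T_k=\mathrm{co}\,\bigcap_k T_k$ for nested compacts plus the identification $\bigcap_k T_k=S$, but the underlying computations coincide), and part $(iii)$ via continuity of $Xu$ and density of $U\setminus N$. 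The only cosmetic bonus of your presentation is that it proves the equality $\bigcap_k T_k=S$ both ways, which makes the compactness of the generating set explicit and lets you replace $\ch$ by $\mathrm{co}$ cleanly, while the paper only uses the inclusion it needs.
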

	\begin{proof}
		We start by proving $(i)$. We fix $x\in U$ and show that $\sub u(x)\neq\emptyset$. Let $r>0$ be small enough to have $B_r(x)\Subset U$. Then $u\in\winfx{B_r(x)}$. So we set $L:=\|Xu\|_{L^\infty(B_r(x))}$. Let $(r_n)_{n}\subseteq (0,r)$ with $r_n\searrow 0$. Then, for any $n\in\mathbb{N}$, take $y_n\in B_{r_n}(x)\setminus N$. Then clearly $y_n$ tends to $x$. Moreover, being $y_n$ a Lebesgue point of $Xu$, it follows that 
		$$|Xu(y_n)|=\left|\lim_{s\to 0^+}\ave_{B_s(y_n)}Xu(z)dz\right|\leq\lim_{s\to 0^+}\ave_{B_s(y_n)}|Xu(z)|dz\leq L,$$
		and so $(Xu(y_n))_n$ is bounded in $\Rm$. Therefore, up to a subsequence, we can assume that its limit exsts, that is $\sub u(x)$ is non-empty. From the above proof it is easy to see that $\sub u(x)$ is bounded, while convexity and closure follows directly from its definition.
		Let us prove $(ii)$. We fix $x\in U$ and start by proving the left-to-right inclusion. As the right set is convex and closed, it is sufficient to show that any $z$ of the form
		\begin{equation*}
			z=\lim_{n\to\infty}Xu(y_n),
		\end{equation*}
		with $y_n\to x$ and $y_n\notin N$, belongs to 
		$$\ch \{Xu(y)\,:\,y\in B_{1/k}(x)\setminus N\}$$
		for any $k\in\mathbb{N}\setminus\{0\}$. As $y_n$ tends to $x$ we get that $y_n\in B_{1/k}(x)\setminus N$ for $n$ sufficiently large. Therefore, as the conclusion follows for each $X(y_n)$ and the right set is closed, we have proved the desired inclusion. The proof of the converse inclusion follows form the two following Lemmas, which will be proved at the end of this paper to avoid confusion.
		\begin{lem}\label{lemmauno}
			Let 
			$$S:=\left\{\lim_{n\to\infty}Xu(y_n)\,:\,y_n\to x,\,y_n\notin N\text{ and }\exists\lim_{n\to\infty}Xu(y_n)\right\}$$
			and, for any $k\geq 1$, let
			$$A_k=\{Xu(y)\,:\,y\in B_{1/k}(x)\setminus N\}.$$
			Then it follows that
			\begin{equation*}
				\bigcap_{k=1}^\infty\cl A_k \subseteq S.
			\end{equation*}
		\end{lem}
		\begin{lem}\label{lemmadue}
			Let $(A_k)_k$ be a decreasing sequence of non-empty bounded subsets of $\Rm$, and let $S$ be a non-empty, bounded subset of $\Rm$. Assume that 
			\begin{equation*}
				\bigcap_{k=1}^\infty\cl A_k \subseteq S.
			\end{equation*}
			Then it follows that
			\begin{equation*}
				\bigcap_{k=1}^\infty\ch (\cl A_k) \subseteq \ch(S).
			\end{equation*}
		\end{lem}
		
		Now we prove $(iii)$. Let $x\in U$ and let $(y_n)_n\subseteq U\setminus N$ converges to $x$. Then from the continuity of $Xu$ it follows that $\lim_{n\to\infty}Xu(u_n)=Xu(x)$. Since $\{Xu(x)\}$ is convex and closed, this implies that $\sub u(x)\subseteq \{Xu(x)\}$. Conversely, being $N$ negligible, there exists a sequence $(y_n)_n\subseteq U\setminus N$ which converges to $x$. Again thanks to the continuity of $Xu$, the converse inclusion follows.
	\end{proof}
	With the following proposition we see that the notion of $(N,X)$-subgradient, in analogy with the Euclidean setting, is the right tool to deal with differentiability of $X$-Lipschitz functions along horizontal curves.
	\begin{prop}\label{derivabilitaac}
		Assume that $X$ satisfies $(X1)$.
		Let $1\leq p\leq+\infty$, let $u\in W^{1,\infty}_{X,loc}(U)$ and let $\gamma\in \emph{AC}([-\beta,\beta],U)$ be a horizontal curve with
		\begin{equation*}
			\Dot{\gamma}(t)=C(\gamma(t))^T\cdot A(t)
		\end{equation*}
		and $A\in L^p((-\beta,\beta),\Rm)$. Then the curve $t\mapsto u(\gamma(t))$ belongs to $W^{1,p}(-\beta,\beta)$, and there exists a function $g\in L^\infty((-\beta,\beta),\Rm)$ such that 
		\begin{equation*}
			\frac{d u(\gamma(t))}{d t}=g(t)\cdot A(t)
		\end{equation*}
		for a.e. $t\in (-\beta,\beta)$. Moreover
		\begin{equation*}
			g(t)\in\sub u(\gamma (t))
		\end{equation*}
		for a.e. $t\in (-\beta,\beta).$
	\end{prop}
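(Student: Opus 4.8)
The plan is to prove the statement by mollification, combined with a weak-$*$ compactness argument and the representation of $\sub u$ given in Proposition~\ref{equivalenza}$(ii)$. Throughout, put $K:=\gamma([-\beta,\beta])$, which is compact in $U$, fix an open set $V$ with $K\subseteq V\Subset U$, and set $L:=\|Xu\|_{L^\infty(V)}<+\infty$. I would first check that $v(t):=u(\gamma(t))$ belongs to $W^{1,p}(-\beta,\beta)$. On each subinterval $[s,t]\subseteq[-\beta,\beta]$ the curve $\gamma$ is horizontal with $\dot\gamma=\sum_{j=1}^m A_j X_j(\gamma)$ and $\sum_j A_j^2=|A|^2$, so, reparametrising $\gamma|_{[s,t]}$ as a subunit curve, $d_X(\gamma(s),\gamma(t))\le\int_s^t|A(\tau)|\,d\tau$; combined with \eqref{wl} and \eqref{continuita} this gives $|v(t)-v(s)|\le L\int_s^t|A(\tau)|\,d\tau$. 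Since $A\in L^p((-\beta,\beta),\Rm)\subseteq L^1((-\beta,\beta),\Rm)$, the right-hand side is the increment of an absolutely continuous function, so $v$ is absolutely continuous, hence $v\in W^{1,1}(-\beta,\beta)$ with $|v'|\le L|A|$ a.e., whence $v\in W^{1,p}(-\beta,\beta)$.

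The technical core is a commutator estimate for the standard mollification $u_\varepsilon:=u*\rho_\varepsilon$, $(\rho_\varepsilon)$ a standard family of mollifiers, which is smooth on $V$ once $\varepsilon<\operatorname{dist}(K,\partial V)$. Denoting by $c_i=(c_{i,1},\dots,c_{i,n})$ the $i$-th row of $C$, one has, by the distributional definition of $Xu$,
\begin{equation*}
X_iu_\varepsilon(x)-\big((X_iu)*\rho_\varepsilon\big)(x)=-\int_U u(y)\,\diver_y\big[(c_i(x)-c_i(y))\rho_\varepsilon(x-y)\big]\,dy;
\end{equation*}
subtracting the constant $u(x)$ inside the integral (legitimate because $\int\diver_y[\,\cdot\,]\,dy=0$) and estimating the resulting integrand via the local Lipschitz character of the $c_{i,j}$, one gets $|X_iu_\varepsilon(x)-((X_iu)*\rho_\varepsilon)(x)|\le C_V\,\omega_u(\varepsilon)$ for $x\in K$, where $\omega_u$ is the modulus of continuity of $u$ on $V$ (finite by \eqref{continuita}) and $C_V$ depends only on the $c_{i,j}$ near $K$. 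In particular $\|X_iu_\varepsilon-(X_iu)*\rho_\varepsilon\|_{L^\infty(K)}\to0$, so $\|Xu_\varepsilon\|_{L^\infty(K)}$ is bounded uniformly in $\varepsilon$, and moreover $u_\varepsilon\to u$ uniformly on $K$.

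Now set $g_\varepsilon(t):=Xu_\varepsilon(\gamma(t))\in\Rm$, a continuous function of $t$. By the chain rule for the composition of the $C^1$ map $u_\varepsilon$ with the absolutely continuous curve $\gamma$, the function $v_\varepsilon(t):=u_\varepsilon(\gamma(t))$ is absolutely continuous on $[-\beta,\beta]$ with $v_\varepsilon'(t)=Du_\varepsilon(\gamma(t))\cdot C(\gamma(t))^T\cdot A(t)=g_\varepsilon(t)\cdot A(t)$ for a.e.\ $t$. As $(g_\varepsilon)_\varepsilon$ is bounded in $L^\infty((-\beta,\beta),\Rm)$, along a subsequence it converges weakly-$*$ to some $g\in L^\infty((-\beta,\beta),\Rm)$; since $A\phi\in L^1$ for $\phi\in C_c^\infty(-\beta,\beta)$ we get $\int v_\varepsilon'\phi=\int (g_\varepsilon\cdot A)\phi\to\int(g\cdot A)\phi$, while $v_\varepsilon\to v$ uniformly gives $\int v_\varepsilon'\phi=-\int v_\varepsilon\phi'\to-\int v\phi'=\int v'\phi$. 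Hence $v'=g\cdot A$ a.e., which is the asserted identity, with $g\in L^\infty$.

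It remains to show that $g(t)\in\sub u(\gamma(t))$ for a.e.\ $t$, which I expect to be the main obstacle. The point is that $((Xu)*\rho_\varepsilon)(\gamma(t))=\int Xu(y)\rho_\varepsilon(\gamma(t)-y)\,dy$ is an average of the values $Xu(y)$, $y\in B_\varepsilon(\gamma(t))\setminus N$, against the probability density $\rho_\varepsilon(\gamma(t)-\,\cdot\,)$, hence belongs to $\ch\{Xu(y):y\in B_\varepsilon(\gamma(t))\setminus N\}$; by the estimate above, $g_\varepsilon(t)$ lies within $\sigma_\varepsilon$ of this convex set for every $t$, with $\sigma_\varepsilon\to0$. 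Fix $k\ge1$ large and $t_0$; continuity of $\gamma$ gives $\delta>0$ such that $B_\varepsilon(\gamma(t))\subseteq B_{1/k}(\gamma(t_0))$ whenever $|t-t_0|<\delta$ and $\varepsilon<\tfrac{1}{2k}$, so, writing $K_k(t_0):=\ch\{Xu(y):y\in B_{1/k}(\gamma(t_0))\setminus N\}$ (compact and convex, by the proof of Proposition~\ref{equivalenza}$(i)$), for every $\eta>0$ and all small $\varepsilon$ one has $g_\varepsilon(t)\in K_k(t_0)+\overline{B_\eta(0)}$ for all $t\in(t_0-\delta,t_0+\delta)$. A routine weak-$*$ semicontinuity argument — for $\xi$ in a countable dense subset of $\Rm$, test $\xi\cdot g_\varepsilon(t)\le\max_{c\in K_k(t_0)+\overline{B_\eta(0)}}\xi\cdot c$ against nonnegative $\phi\in C_c^\infty$ and pass to the limit, then let $\eta\downarrow0$ — then yields $g(t)\in K_k(t_0)$ for a.e.\ $t\in(t_0-\delta,t_0+\delta)$. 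Finally, choosing $t_0$ to be a Lebesgue point of $g$ and averaging over $(t_0-h,t_0+h)$ as $h\downarrow0$ (using that $K_k(t_0)$ is closed and convex) gives $g(t_0)\in K_k(t_0)$ for a.e.\ $t_0$ and every $k$; intersecting over $k$ and invoking Proposition~\ref{equivalenza}$(ii)$ gives $g(t_0)\in\sub u(\gamma(t_0))$ for a.e.\ $t_0$, completing the proof.
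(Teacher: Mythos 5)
Your proposal follows the same overall strategy as the paper: mollify $u$, control the commutator $Xu_\varepsilon-(Xu)*\rho_\varepsilon$ uniformly on compact sets, apply the fundamental theorem of calculus to $u_\varepsilon\circ\gamma$, extract a weak-$*$ limit $g$ of $Xu_\varepsilon(\gamma(\cdot))$ in $L^\infty$, pass to the limit in the integral identity, and finally show $g(t)\in\sub u(\gamma(t))$ a.e.\ by relating $(Xu)*\rho_\varepsilon$ to the convex hulls appearing in Proposition~\ref{equivalenza}$(ii)$. The one genuine divergence is in the last step, where the two routes to upgrading the weak-$*$ information to a pointwise a.e.\ inclusion differ. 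The paper invokes Mazur's Lemma to produce convex combinations of $Xu_{1/n}(\gamma(\cdot))$ converging strongly in $L^2$, hence (along a subsequence) pointwise a.e., and these convex combinations sit inside the relevant shrinking convex sets by construction; the inclusion then survives the pointwise limit. You instead test scalar inequalities $\xi\cdot g_\varepsilon\le\max_{K_k(t_0)+\overline{B_\eta}}\xi\cdot(\cdot)$ against nonnegative test functions, use a countable dense set of directions $\xi$ to reconstruct the convex constraint a.e., and then pass from ``$g(t)\in K_k(t_0)$ for a.e.\ $t$ near $t_0$'' to ``$g(t_0)\in K_k(t_0)$'' by averaging at Lebesgue points. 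Both arguments are correct. Mazur's Lemma is shorter and avoids the Lebesgue-point bookkeeping; your semicontinuity route is slightly more hands-on but avoids extracting a further a.e.\ convergent subsequence, and makes explicit where the compact convex sets $K_k(t_0)$ enter. Two smaller points: you derive the commutator estimate directly, whereas the paper simply cites \cite{wang} for the bound $|Xu_\delta-(Xu)_\delta|\le w(\delta)$ (your derivation is essentially that proof); and you give an independent proof of $W^{1,p}$ membership via the $d_X$-Lipschitz estimate, which is redundant once the identity $v'=g\cdot A$ with $g\in L^\infty$, $A\in L^p$ is in hand, but of course harmless.
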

	\begin{proof}
		Let $(\varrho_\delta)_\delta$ be a sequence of spherically symmetric mollifiers, and let $N$ be any negligible set which contains all the non-Lebesgue points of $Xu$.
		If $\delta$ is sufficiently small and we define $u_\delta$ and $(Xu)_\delta$ to be the standard convolutions, we have that these functions are smooth on a bounded open set, say $V$, such that $V\Subset U$ and $V$ contains the support of $\gamma$. Moreover, as $X$ satisfies $(X1)$, from \cite{wang} we know that there exists a non-negative and non-decreasing function $w(\delta)$ (depending on the chosen function $u$) defined in a right neighborhood of $0$, such that 
		\begin{equation*}
			\lim_{\delta\to 0^+}w(\delta)=0
		\end{equation*}
		and moreover
		\begin{equation}\label{stimadelta}
			|X(u_\delta)(x)-(Xu)_\delta(x)|\leq w(\delta)
		\end{equation}
		for any $x\in V$.
		As $u_\delta$ is $C^1$ and $\gamma$ is absolutely continuous, from standard calculus we have that
		\begin{equation}\label{tfc}
			\begin{split}
				u_\delta(\gamma(t))-u_\delta(\gamma(0))&=\int_0^tD(u_\delta)(\gamma(s))\cdot\Dot{\gamma}(s)ds\\
				&=\int_0^tD(u_\delta)(\gamma(s))\cdot C(\gamma(s))^T\cdot A(s)ds\\
				&=\int_0^tX(u_\delta)(\gamma(s))\cdot A(s)ds.
			\end{split}
		\end{equation}
		Let us consider now the sequence of functions $X(u_{1/n})(\gamma(\cdot))$. It is easy to see that it is bounded in $L^\infty((-\beta,\beta),\Rm)$. Therefore (up to a subsequence) there exists a function $g\in L^\infty((-\beta,\beta),\Rm)$ such that
		\begin{equation}\label{weakconv}
			X(u_{1/n})(\gamma(\cdot))\rightharpoonup^* g(\cdot)\qquad \text{in }L^\infty((-\beta,\beta),\Rm)
		\end{equation}
		as $n$ goes to infinity, and so in particular
		\begin{equation}\label{weakconv2}
			X(u_{1/n})(\gamma(\cdot))\rightharpoonup g(\cdot)\qquad \text{in }L^2((-\beta,\beta),\Rm)
		\end{equation}
		as $n$ goes to infinity. Since $u$ is continuous, then by well known results we have that $u_\delta$ converges uniformly to $u$ on $V$. Therefore, passing to the limit in \eqref{tfc}, noticing in particular that $A\in L^1((-\beta,\beta),\Rm)$ and exploiting \eqref{weakconv}, we obtain that 
		\begin{equation*}
			u(\gamma(t))-u(\gamma(0))=\int_0^tg(s)\cdot A(s)ds.
		\end{equation*}
		We are left to show that $g(t)\in\sub u(\gamma(t))$ for a.e. $t\in(-\beta,\beta)$.
		Let us notice that, since for any $x\in V$ we have that
		\begin{equation*}
			(Xu)_\delta (x)=\int_{B_\delta(x)\setminus N}\varrho_\delta(y-x)Xu(y)dy,
		\end{equation*}
		it follows that
		\begin{equation}\label{ch1}
			(Xu)_\delta(x)\in\ch \{Xu(y)\,:\,y\in B_\delta(x)\setminus N\}
		\end{equation}
		for any $x\in V$. Indeed, recalling that $Xu\in L^\infty(B_\delta(x)\setminus N)$ for $\delta$ small enough, setting $m:=\inf_{B_\delta(x)\setminus N}Xu$ and $M:=\sup_{B_\delta (x)\setminus N}Xu$, it holds that
		\begin{equation*}
			m=m\int_{B_\delta (x)\setminus N}\rho_\delta(x-y)dy\leq (Xu)_\delta (x)\leq M\int_{B_\delta (x)\setminus N}\rho_\delta(x-y)dy=M,
		\end{equation*}
		and so $(Xu)_\delta(x)\in [m,M]$. Therefore, noticing that
		\begin{equation*}
			[m,M]=\ch \{m,M\}\subseteq\ch\{Xu(y)\,:\,y\in B_\delta(x)\setminus N\}\subseteq[m,M],
		\end{equation*}
		then \eqref{ch1} follows. Thanks to \eqref{weakconv2} and Mazur's Lemma (cf. e.g. \cite[Corollary 3.9]{brezis}), for each $m\in\mathbb{N}$ there are convex combinations of $X(u_{1/n})(\gamma(\cdot))$ converging strongly to $g$ in $L^2((-\beta,\beta),\Rm)$, that is
		\begin{equation*}
			v_m(\cdot):=\sum_{n=M_m}^{N_m}a_{m,n}X(u_{1/n})(\gamma(\cdot))\longrightarrow g(\cdot) \qquad \text{in }L^2((-\beta,\beta),\Rm),
		\end{equation*}
		with $M_m<N_m$ and $\lim_{m\to\infty}M_m=+\infty$. Moreover (again up to a subsequence) we can assume that the above convergence holds pointwise for a.e. $t\in(-\beta,\beta)$.
		Let us define now 
		\begin{equation*}
			z_m(\cdot):=\sum_{n=M_m}^{N_m}a_{m,n}(Xu)_{1/n}(\gamma(\cdot)).
		\end{equation*}
		Then, thanks to \eqref{stimadelta} we have that
		\begin{equation*}
			\begin{split}
				|z_m(t)-g(t)|&\leq\sum_{n=M_m}^{N_m}a_{m,n}|X(u_{1/n})(\gamma(t))-(Xu)_{1/n}(\gamma(t))|+|v_m(t)-g(t)|\\
				&\leq\sum_{n=M_m}^{N_m}a_{m,n}w(1/n)+|v_m(t)-g(t)|\\
				&\leq\sum_{n=M_m}^{N_m}a_{m,n}w(1/M_m)+|v_m(t)-g(t)|\\
				&=w(1/M_m)+|v_m(t)-g(t)|,
			\end{split}
		\end{equation*}
		which implies that $z_m$ converges to $g$ pointwise for a.e. $t\in(-\beta,\beta)$ as $m\to\infty$. Moreover, thanks to \eqref{ch1} and the definition of $z_m$ it follows easily that
		\begin{equation*}
			z_m(t)\in\ch \{Xu(y)\,:\,y\in B_{1/M_m}(\gamma(t))\setminus N\}\subseteq \ch \{Xu(y)\,:\,y\in B_{1/k}(\gamma(t))\setminus N\}
		\end{equation*}
		for any $t\in(-\beta,\beta)$ and for any $k\leq M_m$. Therefore, thanks to the pointwise convergence as $m\to\infty$, we get that
		\begin{equation*}
			g(t)\in\bigcap_{k=1}^\infty\ch \{Xu(y)\,:\,y\in B_{1/k}(\gamma(t))\setminus N\}.
		\end{equation*}
		for a.e. $t\in(-\beta,\beta)$. Finally, thanks to Proposition \ref{equivalenza}, the thesis follows.
	\end{proof}
	As a corollary of the previous proposition we have the following result.
	\begin{prop}\label{derivabilitaacc1}
		Assume that $X$ satisfies $(X1)$.
		Let $u\in C^{1}_{X}(U)$ and let $\gamma\in C^1([-\beta,\beta],U)$ be a horizontal curve with
		\begin{equation*}
			\Dot{\gamma}(t)=C(\gamma(t))^T\cdot A(t)
		\end{equation*}
		and $A\in C([-\beta,\beta],\Rm)$. Then the curve $t\mapsto u(\gamma(t))$ belongs to $C^1(-\beta,\beta)$ and 
		\begin{equation*}
			\frac{d u(\gamma(t))}{d t}=Xu(\gamma(t))\cdot A(t)
		\end{equation*}
		for any $t\in (-\beta,\beta)$.
	\end{prop}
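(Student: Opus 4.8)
The plan is to obtain this as an immediate consequence of Proposition~\ref{derivabilitaac} together with Proposition~\ref{equivalenza}$(iii)$, followed by a routine bootstrap from an almost-everywhere identity to a pointwise one. First I would check that the hypotheses of Proposition~\ref{derivabilitaac} hold with the choice $p=+\infty$. Since $u\in C^1_X(U)$, its horizontal gradient $Xu$ is (represented by) a continuous, hence locally bounded, vector field, so $u\in W^{1,\infty}_{X,loc}(U)$; moreover $\gamma\in C^1([-\beta,\beta],U)$ is in particular absolutely continuous, and $A\in C([-\beta,\beta],\Rm)\subseteq L^\infty((-\beta,\beta),\Rm)$. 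Hence Proposition~\ref{derivabilitaac} applies and provides $g\in L^\infty((-\beta,\beta),\Rm)$ such that $t\mapsto u(\gamma(t))$ lies in $W^{1,\infty}(-\beta,\beta)$, $\frac{d}{dt}u(\gamma(t))=g(t)\cdot A(t)$ for a.e.\ $t$, and $g(t)\in\sub u(\gamma(t))$ for a.e.\ $t\in(-\beta,\beta)$. Since $u\in C^1_X(U)$, Proposition~\ref{equivalenza}$(iii)$ gives $\sub u(x)=\{Xu(x)\}$ for every $x\in U$, so necessarily $g(t)=Xu(\gamma(t))$ for a.e.\ $t$ and therefore
\begin{equation*}
\frac{d}{dt}u(\gamma(t))=Xu(\gamma(t))\cdot A(t)\qquad\text{for a.e.\ }t\in(-\beta,\beta).
\end{equation*}

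It then remains to upgrade this to a statement valid at every point and with $C^1$ regularity. The function $t\mapsto Xu(\gamma(t))\cdot A(t)$ is continuous on $[-\beta,\beta]$, being the product of $Xu\circ\gamma$ — continuous because both $Xu$ and $\gamma$ are — with the continuous function $A$. Since $t\mapsto u(\gamma(t))$ is absolutely continuous, integrating the previous identity yields
\begin{equation*}
u(\gamma(t))-u(\gamma(0))=\int_0^t Xu(\gamma(s))\cdot A(s)\,ds\qquad\text{for all }t\in[-\beta,\beta],
\end{equation*}
and, the integrand being continuous, the fundamental theorem of calculus shows that $t\mapsto u(\gamma(t))$ is of class $C^1$ on $(-\beta,\beta)$ with $\frac{d}{dt}u(\gamma(t))=Xu(\gamma(t))\cdot A(t)$ at every point, which is the claim. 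There is no genuine obstacle in this argument; the only mildly delicate point is the verification that $C^1_X(U)\subseteq W^{1,\infty}_{X,loc}(U)$, which is needed before Proposition~\ref{derivabilitaac} can be invoked.
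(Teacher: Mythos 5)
Your proof is correct and matches what the paper intends: the paper gives no explicit proof, only the remark that the statement is a corollary of Proposition~\ref{derivabilitaac}, and your argument supplies precisely the missing details. You correctly verify $C^1_X(U)\subseteq W^{1,\infty}_{X,loc}(U)$, invoke Proposition~\ref{derivabilitaac} with $p=+\infty$, collapse $\sub u(\gamma(t))$ to the singleton $\{Xu(\gamma(t))\}$ via Proposition~\ref{equivalenza}$(iii)$, and then upgrade the a.e.\ identity to a $C^1$ statement by the fundamental theorem of calculus with a continuous integrand. No gaps.
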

	We conclude this section with a useful property which links subgradients and quasiconvex functions.
	\begin{lem}\label{sublevel}
		Let $f\in C(U\times\Ru\times\Rm)$ be a non-negative function which satisfies $(f2)$. Let $u\in W^{1,\infty}_{X,\emph{loc}}(U)$, $V\in\mA$ and $K\geq0$ such that
		\begin{equation*}
			f(x,u(x),Xu(x))\leq K
		\end{equation*}
		for a.e. $x\in V$. Let $N$ be a Lebesgue-negligible subset of $V$ containing all the points where the previous inequality fails and all the non-Lebesgue points of $Xu$. Then it follows that
		\begin{equation*}
			f(x,u(x),w)\leq K
		\end{equation*}
		for any $x\in V$ and for any $w\in\sub u(x)$.
	\end{lem}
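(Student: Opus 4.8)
The plan is to fix $x\in V$ and $w\in\sub u(x)$ and to deduce $f(x,u(x),w)\le K$ by combining the representation of the subgradient in Proposition \ref{equivalenza}$(ii)$ with the quasiconvexity assumption $(f2)$, after transferring the pointwise bound $f(y,u(y),Xu(y))\le K$, valid for $y\in V\setminus N$, from the moving base point $y$ to the fixed point $x$. The first thing to record is the elementary consequence of $(f2)$ that, for every $x\in\Om$, $s\in\Ru$ and $c\ge 0$, the sublevel set $\{p\in\Rm\,:\,f(x,s,p)\le c\}$ is convex: by induction, quasiconvexity yields $f\big(x,s,\sum_{i=1}^l\lambda_ip_i\big)\le\max_i f(x,s,p_i)$ for any $l\ge 1$, $\lambda\in\Lambda_l$ and $p_1,\dots,p_l\in\Rm$; and by $(f1)$ this sublevel set is also closed.

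I would then argue as follows. First, since $V$ is open, pick $r_0>0$ with $\overline{B_{r_0}(x)}\subseteq V$ and set $L:=\|Xu\|_{L^\infty(B_{r_0}(x))}<\infty$, which is finite since $u\in\winfx{B_{r_0}(x)}$; exactly as in the proof of Proposition \ref{equivalenza}$(i)$ one gets $\sub u(x)\subseteq\overline{B_L(0)}$, so in particular $w\in\overline{B_L(0)}$, and $Xu(y)\in\overline{B_L(0)}$ for every Lebesgue point $y\in B_{r_0}(x)$. Next, fix $\varepsilon>0$; using the continuity of $u$ (see \eqref{continuita}) and the uniform continuity of $f$ on the compact set $\overline{B_{r_0}(x)}\times\big[\min_{\overline{B_{r_0}(x)}}u,\max_{\overline{B_{r_0}(x)}}u\big]\times\overline{B_L(0)}$, choose $0<\delta<r_0$ so that $|f(y,u(y),p)-f(x,u(x),p)|<\varepsilon$ whenever $y\in B_\delta(x)$ and $p\in\overline{B_L(0)}$. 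Then for any $k$ with $1/k<\delta$ and any $y\in B_{1/k}(x)\setminus N\subseteq V\setminus N$, the choice of $N$ gives that $y$ is a Lebesgue point of $Xu$ with $f(y,u(y),Xu(y))\le K$, whence $f(x,u(x),Xu(y))\le K+\varepsilon$. Since $\{p\,:\,f(x,u(x),p)\le K+\varepsilon\}$ is convex and closed it contains $\ch\{Xu(y)\,:\,y\in B_{1/k}(x)\setminus N\}$, which by Proposition \ref{equivalenza}$(ii)$ contains $\sub u(x)\ni w$. Hence $f(x,u(x),w)\le K+\varepsilon$, and letting $\varepsilon\to 0^+$ concludes the proof.

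The only genuinely delicate point is this last transfer step: what is directly controlled are the sublevel sets of $p\mapsto f(y,u(y),p)$ at points $y$ near $x$, not the sublevel set of $p\mapsto f(x,u(x),p)$ at $x$ itself, so quasiconvexity cannot be invoked until all the values $Xu(y)$ have been collected into a single sublevel set of $f(x,u(x),\cdot)$ at the price of an arbitrarily small error — which is exactly what the uniform continuity of $f$, the continuity of $u$, and the local $L^\infty$ bound on $Xu$ supply. The remaining bookkeeping — keeping the balls $B_{1/k}(x)$ inside $V$ so that the a.e.\ inequality applies, and checking that $Xu(y)$ and $w$ lie in the fixed ball $\overline{B_L(0)}$ — is routine.
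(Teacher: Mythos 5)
Your argument is correct. You fix $x$, $w\in\sub u(x)$, and an $\varepsilon>0$; localize on a ball $\overline{B_{r_0}(x)}\subseteq V$ to obtain a uniform $L^\infty$ bound $L$ on $Xu$; use uniform continuity of $f$ on the compact set $\overline{B_{r_0}(x)}\times u(\overline{B_{r_0}(x)})\times\overline{B_L(0)}$ together with continuity of $u$ to transfer the base point, getting $f(x,u(x),Xu(y))\le K+\varepsilon$ for all Lebesgue points $y$ near $x$; then invoke the closed-convexity of the sublevel set $\{p:f(x,u(x),p)\le K+\varepsilon\}$ (quasiconvexity plus continuity) and Proposition \ref{equivalenza}$(ii)$ to absorb $w$ into that set, and let $\varepsilon\to0^+$. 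Each of these steps is sound; one tiny slip is that you cite $(f1)$ for closedness of the sublevel set, but Lemma \ref{sublevel} only assumes $f\in C(U\times\Ru\times\Rm)$ — continuity is of course all that is needed there.

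The route differs from the paper's in its bookkeeping, though it rests on the same two pillars (convex sublevel sets from $(f2)$, continuity of $f$ and $u$). The paper works directly from the definition of $\sub u(x)$: it writes $w$ as a limit of convex combinations, applies Carath\'eodory's theorem to reduce to finitely many ``extreme'' elements $w^h_j=\lim_s Xu(y_s)$, verifies $f(x,u(x),w^h_j)\le K$ via a sequential continuity argument $\lim_s f(x,u(x),Xu(y_s))=\lim_s f(y_s,u(y_s),Xu(y_s))\le K$, and then reassembles by convexity of the sublevel set and continuity in $p$. You instead lean on the already-proved alternative characterization in Proposition \ref{equivalenza}$(ii)$ to package the whole subgradient inside a single closed convex hull, and replace the double limit passage by a uniform $\varepsilon$-argument. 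Your version avoids explicitly invoking Carath\'eodory's theorem and the two nested limits; the paper's version avoids the explicit compactness/uniform continuity setup by arguing pointwise along a single sequence. Both are of comparable length and rigor, and either could appear in the paper.
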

	\begin{proof}
		Let $x\in V$ be fixed and let $w\in\sub u(x)$. Then there exists a sequence 
		$$(w_h)_h\subseteq co\left\{\lim_{n\to\infty}Xu(y_n)\,:\,y_n\to x,\,y_n\notin N\text{ and }\exists\lim_{n\to\infty}Xu(y_n)\right\}$$
		converging to $w$ in $\Rm$. If we are able to prove the claim for each $w_h$, the thesis follows from the continuity of $f$ in the third argument. Fix then $h$. Thanks to Carathéodory Theorem (cf. \cite[Theorem 1.2]{dacorogna}) there are $(\lambda^h_1,\ldots,\lambda^h_{n+1})\in\Lambda_{n+1}$ and $w^h_1,\ldots,w^h_{n+1}$ such that
		$$w^h_j\subseteq \left\{\lim_{n\to\infty}Xu(y_n)\,:\,y_n\to x,\,y_n\notin N\text{ and }\exists\lim_{n\to\infty}Xu(y_n)\right\}$$
		for any $j=1,\ldots,n+1$ and
		$$w^h=\sum_{j=1}^{n+1}\lambda^h_jw^h_j.$$
		Again, if we are able to show the claim for each $w^h_j$, we are done thanks to the convexity of sublevel sets of $f$. Let us fix $j$ and take a sequence $(y_s)_s\subseteq V\setminus N$ converging to $x$ and such that $w^h_j=\lim_{s\to\infty}X(y_s)$.
		As the the map $(x,\eta)\mapsto f(x,u(x),\eta)$ is continuous, and thanks again to the global continuity of $f$, we conclude that
		\begin{equation*}
			f(x,u(x),w^h_j)=\lim_{s\to\infty}f(x,u(x),Xu(y_s))=\lim_{s\to\infty}f(y_s,u(y_s),Xu(y_s))\leq K.
		\end{equation*}
	\end{proof}

	\subsection{Supremal functionals, absolute minimizers and Aronsson equation}
	For sake of completeness we make explicit the definition of supremal functional and of absolute minimizer in the framework of Carnot-Carathéodory spaces.
	Indeed, given a non-negative function $f\in C(U\times\Ru\times\Rm)$, we define its associated \emph{supremal functional} $F:\winfx{U}\times\mA\scu[0,+\infty]$ as
	\begin{equation*}
		F(u,V):=\|f(x,u,Xu)\|_{L^\infty(V)}
	\end{equation*}
	for any $V\in\mA, u\in\winfx{V}$, and we say that $u\in\winfx{U}$ is an \emph{absolute minimizer} of $F$ if
	\begin{equation*}
		F(u,V)\leq F(v,V)
	\end{equation*}
	for any $V\Subset U$ and for any $v\in\winfx{V}$ with $v|_{\partial V}=u|_{\partial V}$.
	Moreover, according to \cite{wang}, we say that a function $A\in C(U\times\Ru\times\Rm\times S^m)$ is \emph{horizontally elliptic} if 
	\begin{equation*}
		A(x,s,p,Z)\leq A(x,s,p,Y)
	\end{equation*}
	whenever $x\in U$, $s\in\Ru$, $p\in\Rm$ and $Z,Y\in S^m$ with $Y\leq Z$.
	If $f$ as above belongs to $C^1(U\times\Ru\times\Rm)$, we can define $A_f:U\times\Ru\times\Rm\times S^m\scu\Ru$ as
	\begin{equation*}
		A_f(x,s,p,Z):=-(Xf(x,s,p)+D_sf(x,s,p)p+D_pf(x,s,p)\cdot Z)\cdot D_pf(x,s,p)^T,
	\end{equation*}
	and we say that 
	\begin{equation}\label{ae}
		A_f[\phi](x):=A_f(x,\phi(x),X\phi(x),X^2\phi(x))=0
	\end{equation}
	is the \emph{Aronsson equation} associated to $F$.
	It is easy to check that $A_f$ is continuous and horizontally elliptic. Moreover, for any $\phi\in C^2(U)$ and $x\in U$ it holds that 
	\begin{equation*}
		A_f[\phi](x)=-X(f(x,\phi,X\phi))\cdot D_pf(x,\phi,X\phi)^T.
	\end{equation*}
	According to \cite{visco,wang} we can now recall the notion of \emph{viscosity solution} to the Aronsson equation. Therefore, we say that a function $u\in C(U)$ is a \emph{viscosity subsolution} to the Aronsson equation if
	\begin{equation*}
		A_f[\phi](x_0)\leq 0
	\end{equation*}
	for any $x_0\in U$ and for any $\phi\in C^2(U)$ such that 
	\begin{equation}\label{maximumvisco}
		0=\phi(x_0)-u(x_0)\leq \phi(x)-u(x)
	\end{equation}
	for any $x$ in a neighbourhood of $x_0$.
	Moreover we say that $u$ is a \emph{viscosity supersolution} if $-u$ is a viscosity subsolution, and finally we say that $u$ is a \emph{viscosity solution} if it is both a subsolution and a supersolution. \\
	We end this section with a straightforward property satisfied by quasiconvex function.
	\begin{prop}\label{quasiconvex}
		Let $g\in C^1(\Rm)$ be a quasiconvex function. Then it holds that
		\begin{equation*}
			g(p)\geq g(q)\implies D_pg(p)\cdot(q-p)\leq0
		\end{equation*}
		for any $p,q\in\Rm$.
	\end{prop}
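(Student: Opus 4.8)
The plan is to restrict $g$ to the line segment joining $p$ and $q$ and to exploit quasiconvexity to show that this restriction is non-increasing at the endpoint $p$. Concretely, I would define $h\colon[0,1]\scu\Ru$ by $h(t):=g\big(p+t(q-p)\big)$. Since $g\in C^1(\Rm)$, the composition $h$ is $C^1$ on a neighbourhood of $[0,1]$, and the chain rule gives $h'(0)=D_pg(p)\cdot(q-p)$, which is precisely the quantity we must bound from above. Thus the whole statement is reduced to proving $h'(0)\leq 0$.

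Next I would invoke the hypothesis $g(p)\geq g(q)$ together with quasiconvexity along the segment: for every $t\in(0,1)$ we have $p+t(q-p)=tq+(1-t)p$, so
\[
h(t)=g\big(tq+(1-t)p\big)\leq\max\{g(p),g(q)\}=g(p)=h(0).
\]
Hence $h(t)-h(0)\leq 0$ for all $t\in(0,1)$. Dividing by $t>0$ and letting $t\to 0^+$, the difference quotient $\tfrac{h(t)-h(0)}{t}$ stays $\leq 0$, and since $h$ is differentiable at $0$ its limit equals $h'(0)$; therefore $h'(0)\leq 0$, which is the assertion.

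There is essentially no obstacle in this argument; the only point deserving a little care is that quasiconvexity only controls $h$ on the right of $0$, so one must argue with the one-sided difference quotient rather than with a two-sided derivative — but differentiability of $h$ at $0$ makes the one-sided limit coincide with $h'(0)$, so nothing is lost. (If one wanted, the same line of reasoning applied to $q=p$ trivially gives equality, and no endpoint or boundary subtlety arises since the segment $\{p+t(q-p):t\in[0,1]\}$ lies entirely in $\Rm$.)
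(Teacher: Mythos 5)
Your argument is correct and complete: restricting $g$ to the segment from $p$ to $q$, using quasiconvexity together with $g(p)\geq g(q)$ to see that the one-sided difference quotients $\tfrac{h(t)-h(0)}{t}$ are nonpositive, and then invoking differentiability of $h$ at $0$ gives exactly $D_pg(p)\cdot(q-p)\leq 0$; the parenthetical remark handling $p=q$ is also the right thing to add, since the paper's definition of quasiconvexity only applies for $p_1\neq p_2$. The paper states this proposition without proof, calling it a "straightforward property," and the one-dimensional difference-quotient argument you give is precisely the expected one.
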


	\section{The Main Theorem}
	We are ready to state and prove the main theorem of this paper.
	\begin{thm}
		Assume that $(X1),(X2),(f1),(f2)$ hold.
		Then any absolute minimizer of $F$ is a viscosity solution to the Aronsson equation.
	\end{thm}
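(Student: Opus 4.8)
The plan is to follow the five-step scheme described in the Introduction. By the standard symmetry of the problem it suffices to show that every absolute minimizer $u$ of $F$ is a viscosity \emph{subsolution} of the Aronsson equation: indeed $-u$ is an absolute minimizer of the functional associated with $\tilde f(x,s,p):=f(x,-s,-p)$, which still satisfies $(f1)$ and $(f2)$, and viscosity sub- and supersolutions of the two Aronsson equations are exchanged under $u\leftrightarrow -u$. So, arguing by contradiction (Step 1), suppose there are a point --- which after translation we call $0$ --- and $\phi\in C^2(U)$ with $\phi(0)=u(0)$, $\phi\ge u$ near $0$, and $A_f[\phi](0)>0$ in the notation of \eqref{ae}. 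Since $A_f[\phi](0)=-\langle X(f(\cdot,\phi,X\phi))(0),D_pf(0,\phi(0),X\phi(0))\rangle$, this forces $D_pf(0,\phi(0),X\phi(0))\ne 0$, and by $(f1)$ the strict inequality persists, with a uniform positive gap, on a full neighbourhood of $(0,\phi(0),X\phi(0))$.

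Next (Steps 2 and 3), using this non-degeneracy and the $C^2$ regularity supplied by $(f1)$ and $(X2)$, I would construct for each small $\varepsilon>0$ a classical solution $\Psi_\varepsilon\in C^1_X(W)$, on a neighbourhood $W$ of $0$ independent of $\varepsilon$, of $f(x,\Psi_\varepsilon,X\Psi_\varepsilon)=c_\varepsilon:=f(0,\phi(0)-\varepsilon,X\phi(0))$ with $\Psi_\varepsilon(0)=\phi(0)-\varepsilon$ and $X\Psi_\varepsilon(0)=X\phi(0)$. This is done by the (projected) method of characteristics: one solves the horizontal Hamiltonian system $\dot x=C(x)^T D_pf^T$, $\dot s=\langle D_pf,X\Psi_\varepsilon\rangle$, $\dot p=-\big(Xf+D_sf\,p+(\text{lower-order terms in the }X_i\text{'s and their derivatives})\big)$ off a non-characteristic hypersurface through $0$ carrying these data, the $C^2$ hypotheses guaranteeing $C^1$ solutions and, via continuous dependence, that $\Psi_\varepsilon\to\phi$ and $X\Psi_\varepsilon\to X\phi$ locally uniformly as $\varepsilon\to 0$. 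Then set $\mathcal N_\varepsilon:=\{x\in W:\Psi_\varepsilon(x)<u(x)\}$; since $\Psi_\varepsilon(0)=u(0)-\varepsilon<u(0)$ this is a non-empty open neighbourhood of $0$, which shrinks to $\{0\}$ as $\varepsilon\to 0$, so $\overline{\mathcal N_\varepsilon}\Subset W$ for $\varepsilon$ small, and $\Psi_\varepsilon=u$ on $\partial\mathcal N_\varepsilon$. Gluing $\Psi_\varepsilon$ on $\mathcal N_\varepsilon$ with $u$ on $U\setminus\mathcal N_\varepsilon$ yields an admissible competitor, so absolute minimality gives $\|f(\cdot,u,Xu)\|_{L^\infty(\mathcal N_\varepsilon)}\le\|f(\cdot,\Psi_\varepsilon,X\Psi_\varepsilon)\|_{L^\infty(\mathcal N_\varepsilon)}=c_\varepsilon$, and Lemma \ref{sublevel} upgrades this to $f(x,u(x),w)\le c_\varepsilon$ for \emph{every} $x\in\mathcal N_\varepsilon$ and every $w\in\sub u(x)$.

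In Step 4 one normalizes $f$ by a $C^2$ change of the $s$-variable so that $s\mapsto f(x,s,p)$ becomes non-decreasing near $(0,\phi(0),X\phi(0))$ (immediate when $D_sf\ne 0$ there, and arranged by the change of variables in the degenerate case); this yields $c_\varepsilon\le c_0$ and, since $\Psi_\varepsilon<u$ on $\mathcal N_\varepsilon$, upgrades the previous bound to $f(x,\Psi_\varepsilon(x),w)\le f(x,u(x),w)\le c_\varepsilon$ for $w\in\sub u(x)$, $x\in\mathcal N_\varepsilon$. For Step 5 I would solve, for each small $\varepsilon$, the ODE $\dot\gamma_\varepsilon=C(\gamma_\varepsilon)^TA_\varepsilon$, $\gamma_\varepsilon(0)=0$, with control $A_\varepsilon(t):=D_pf(\gamma_\varepsilon(t),\Psi_\varepsilon(\gamma_\varepsilon(t)),X\Psi_\varepsilon(\gamma_\varepsilon(t)))^T$; existence of a $C^1$ horizontal solution follows from the continuity of the data (uniqueness may fail, so one selects a curve for which the estimates close), and $|A_\varepsilon(0)|=|D_pf(0,\phi(0)-\varepsilon,X\phi(0))|>0$ for $\varepsilon$ small, so $\gamma_\varepsilon$ genuinely moves. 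Letting $T_\varepsilon$ be the first exit time from $\mathcal N_\varepsilon$ and $\delta_\varepsilon(t):=u(\gamma_\varepsilon(t))-\Psi_\varepsilon(\gamma_\varepsilon(t))$, we have $\delta_\varepsilon\ge 0$ on $[0,T_\varepsilon]$, $\delta_\varepsilon(0)=\varepsilon$, $\delta_\varepsilon(T_\varepsilon)=0$, and by Propositions \ref{derivabilitaac} and \ref{derivabilitaacc1} $\delta_\varepsilon$ is absolutely continuous with $\dot\delta_\varepsilon(t)=\langle g_\varepsilon(t)-X\Psi_\varepsilon(\gamma_\varepsilon(t)),A_\varepsilon(t)\rangle$ for a.e.\ $t\in(0,T_\varepsilon)$, where $g_\varepsilon(t)\in\sub u(\gamma_\varepsilon(t))$. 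Since $f(\gamma_\varepsilon(t),\Psi_\varepsilon(\gamma_\varepsilon(t)),g_\varepsilon(t))\le c_\varepsilon=f(\gamma_\varepsilon(t),\Psi_\varepsilon(\gamma_\varepsilon(t)),X\Psi_\varepsilon(\gamma_\varepsilon(t)))$, Proposition \ref{quasiconvex} applied to the quasiconvex map $p\mapsto f(\gamma_\varepsilon(t),\Psi_\varepsilon(\gamma_\varepsilon(t)),p)$ gives $\dot\delta_\varepsilon\le 0$; quantifying this using the uniform gap from Step 1 --- which survives along $\gamma_\varepsilon$ because $\gamma_\varepsilon$ stays close to $0$ and $\Psi_\varepsilon,X\Psi_\varepsilon$ are close to $\phi,X\phi$ --- should force a strictly negative decay rate for $\delta_\varepsilon$ that is incompatible, for $\varepsilon$ small, with $\delta_\varepsilon\ge 0$ while $\gamma_\varepsilon$ remains in $W$; this is the desired contradiction.

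I expect the two genuinely delicate points to be Step 2 and the endgame of Step 5. In Step 2, producing the classical solutions $\Psi_\varepsilon$ with a common domain $W$ and with the $C^1$ convergence $\Psi_\varepsilon\to\phi$ is precisely where the $C^2$ assumptions $(f1)$ and $(X2)$ are essential and where the Carnot--Carathéodory structure (the curvature/commutator terms in the characteristic system) has to be handled with care. In Step 5, the real difficulty is to convert the merely qualitative sign $\dot\delta_\varepsilon\le 0$ into a quantitative contradiction: this requires controlling how the strict Aronsson inequality propagates along the possibly non-unique characteristic curves $\gamma_\varepsilon$, and it is here that the subgradient calculus developed in Propositions \ref{equivalenza}, \ref{derivabilitaac}, \ref{derivabilitaacc1} and in Lemma \ref{sublevel} carries the argument.
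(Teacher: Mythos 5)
Your five-step scaffold agrees with the paper's, and you correctly identify the key tools (Lemma \ref{sublevel}, Propositions \ref{equivalenza}, \ref{derivabilitaac}, \ref{derivabilitaacc1}, \ref{quasiconvex}). However, there are two genuine gaps, one in Steps 2--3 and one in the endgame of Step 5, and in both cases the mechanism you describe is not the one that actually closes the argument.

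In Step 2 you only extract $\Psi_\varepsilon(0)=\phi(0)-\varepsilon$, $X\Psi_\varepsilon(0)=X\phi(0)$ and $C^1$ closeness of $\Psi_\varepsilon$ to $\phi$. That is not enough. The crucial feature of the $\Psi_\varepsilon$ in Lemma \ref{lemmacrandall} is the second-order excess $D^2\Psi_\varepsilon(0)-D^2\phi(0)>2\mu I_n$ with $\mu>0$ \emph{independent of} $\varepsilon$, which by Taylor's formula gives $\Psi_\varepsilon(x)>\phi(x)-\varepsilon+\mu|x|^2$ near $0$. Only this forces $\mathcal{N}_\varepsilon\subseteq B_{\sqrt{\varepsilon/\mu}}(0)$, i.e., a quantitative shrinking with explicit radius $\sim\sqrt{\varepsilon}$. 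Your phrase ``$\mathcal{N}_\varepsilon$ shrinks to $\{0\}$'' is not automatic from $C^1$ closeness alone, and without the explicit $\sqrt{\varepsilon/\mu}$ bound the contradiction in Step 5 cannot be produced. (The paper also gets the $\Psi_\varepsilon$ not by running characteristics directly in the sub-Riemannian setting, but by the cleaner device of setting $\overline{f}(x,s,\xi):=f(x,s,C(x)\cdot\xi)$, observing $A_{\overline f}=A_f$, and invoking Crandall's Euclidean construction; this hands over exactly the $D^2$ excess for free.)

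The Step 5 endgame is the more serious problem. You propose to convert $\dot\delta_\varepsilon\le0$ into a \emph{strictly negative} decay rate using the ``uniform positive gap'' from $A_f[\phi](0)>0$, and derive a contradiction from that. This cannot work: quasiconvexity (assumption $(f2)$, not \emph{strict} quasiconvexity) together with $f(x,\cdot,w)\le f(x,\cdot,X\Psi_\varepsilon)$ only yields $D_pf\cdot(w-X\Psi_\varepsilon)\le0$; no lower bound on $|D_pf\cdot(w-X\Psi_\varepsilon)|$ is available, so there is no decay rate for $\delta_\varepsilon$ to speak of. The strict inequality $A_f[\phi](0)>0$ enters elsewhere: (i) through Crandall's lemma in Step 2 (producing the $\mu>0$ excess), and (ii) through the non-vanishing of $D_pf(0,\phi(0),X\phi(0))$, hence $|C(0)^TD_pf(0,\phi(0),X\phi(0))^T|=:2K>0$. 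The actual contradiction is purely geometric and needs no rate: with the paper's sign $\dot\gamma_\varepsilon=-C(\gamma_\varepsilon)^TD_pf(\gamma_\varepsilon,u(\gamma_\varepsilon),X\Psi_\varepsilon(\gamma_\varepsilon))^T$, quasiconvexity plus Proposition \ref{derivabilitaac} give $\frac{d}{dt}(\Psi_\varepsilon-u)(\gamma_\varepsilon(t))\le0$, hence $(\Psi_\varepsilon-u)(\gamma_\varepsilon(t))\le-\varepsilon<0$ and $\gamma_\varepsilon(t)$ \emph{never leaves} $\mathcal{N}_\varepsilon\subseteq B_{\sqrt{\varepsilon/\mu}}(0)$ on $[0,\epsilon_5)$; on the other hand Taylor expansion of $\gamma_\varepsilon$ at $t=0$ gives $|\gamma_\varepsilon(t)|\ge Kt$ for small $t$ uniformly in $\varepsilon$. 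Choosing $\overline\varepsilon$ so small that $t_0:=\frac{2}{K}\sqrt{\overline\varepsilon/\mu}$ lies in the admissible time range forces $|\gamma_{\overline\varepsilon}(t_0)|\ge 2\sqrt{\overline\varepsilon/\mu}$, contradicting $\gamma_{\overline\varepsilon}(t_0)\in B_{\sqrt{\overline\varepsilon/\mu}}(0)$. Note also the sign in your ODE ($A_\varepsilon=+D_pf^T$) is the opposite of what is needed for the ``$\gamma$ trapped in $\mathcal{N}_\varepsilon$'' conclusion; with your sign $\delta_\varepsilon=u-\Psi_\varepsilon$ is non-increasing, which allows $\gamma_\varepsilon$ to exit and leaves you with nothing to contradict. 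This is easily fixed, but it reflects that the geometric structure of the contradiction was not fully in view.
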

	\begin{proof} We divide the proof into several steps:\\
	
		\textbf{Step 1.}
		Let $u$ be an absolute minimizer for $F$. It suffices to show that $u$ is a viscosity subsolution to \eqref{ae}, being the other half of the proof completely analogous. Without loss of generality, we assume that $0\in U$. Arguing by contradiction, we assume that $u$ fails to be a subsolution, that is there exists $x_0\in U$, $R_1>0$ and $\phi\in C^2(U)$ such that \eqref{maximumvisco} holds for any $x\in \overline{B_{R_1}(x_0)}$ and 
		\begin{equation}\label{contradiction}
			A_f[\phi](x_0)>0.
		\end{equation}
		Again, without loss of generality we assume that
		$x_0=0$.\\
		
		\textbf{Step 2.}
		We combine ideas form \cite{crandall} and \cite{wang} to achieve the following 
		\begin{lem}\label{lemmacrandall}
			There exist $0<R_2<R_1$, $\epsilon_1>0$, $\mu>0$ and a continuous function $\Psi:[0,\epsilon_1]\times B_{R_2}(0)\scu\Ru$ such that, if we denote $\Psi(\epsilon,x)$ by $\Psi_\epsilon(x)$, it holds that $x\to\Psi_\epsilon(x)\in C^2(B_{R_2}(0))$ for any $\epsilon\in[0,\epsilon_1]$ and 
			\begin{equation}\label{peano1}
				D\Psi_\epsilon \text{ is continuous in } (x,\epsilon)=(0,0).
			\end{equation}
			Moreover, it holds that 
			\begin{equation}\label{phiepsi}
				\begin{gathered}
					\Psi_\epsilon(0)=\phi(0)-\epsilon
					\qquad D\Psi_\epsilon(0)=D\phi(0)
					\qquad D^2\Psi_\epsilon(0)-D^2\phi(0)> 2\mu I_n\\
					f(x,\Psi_\epsilon(x),X\Psi_\epsilon(x))=f(0,\phi(0)-\epsilon,X\phi(0))
				\end{gathered}
			\end{equation}
			for any $x\in B_{R_2}(0)$.
		\end{lem}
		\begin{proof}[Proof of Lemma \ref{lemmacrandall}]
			Lat us define a new function $\overline{f}$ on $U\times\Ru\times\Rn$ as 
			\begin{equation}
				\overline{f}(x,s,\xi):=f(x,s,C(x)\cdot\xi)
			\end{equation}
			for any $x\in U$, $s\in\Ru$ and $\xi\in\Rn$. Then, since $f$ and $X$ are $C^2$, it follows that $\overline{f}\in C^2(U\times\Ru\times\Rn)$.
			Moreover, trivial computations shows that
			\begin{equation}\label{derivata1}
				D_\xi\overline{f}(x,u,\xi)=D_pf(x,u,C(x)\cdot\xi)\cdot C(x),
			\end{equation}
			and that
			\begin{equation}\label{identita1}
				f(x,\varphi(x),X\varphi(x))=\overline{f}(x,\varphi(x),D\varphi(x))
			\end{equation}
			for any $x\in U$ and any $\varphi\in C^2(U)$. Finally, if we let $A_{\overline{f}}\in C(U\times\Ru\times\Rn\times S^n)$ be the Euclidean Aronsson operator associated to $\overline{f}$, i.e. 
			$$A_{\overline{f}}(x,s,\xi,Z):=-(D_x\overline{f}(x,s,\xi)+D_s\overline{f}(x,s,\xi)\xi+D_\xi\overline{f}(x,s,\xi)\cdot Z)\cdot D_\xi\overline{f}(x,s,\xi)^T,$$
			it follows from \eqref{derivata1} and \eqref{identita1} that
			\begin{equation*}
				\begin{split}
					A_{\overline{f}}[\varphi](x)&=D_x(\overline{f}(x,\varphi(x),D\varphi(x)))\cdot D_\xi\overline{f}(x,s,D\varphi)^T\\
					&=D_x(f(x,\varphi(x),X\varphi(x)))\cdot(D_pf(x,\varphi(x),X
					\varphi(x))\cdot C(x))^T\\
					&=D_x(f(x,\varphi(x),X\varphi(x)))\cdot C(x)^T\cdot D_pf(x,\varphi(x),X\varphi(x))^T\\
					&=X(f(x,\varphi(x),X\varphi(x)))\cdot D_pf(x,\varphi(x),X\varphi(x))^T=A_f[\varphi](x),
				\end{split}
			\end{equation*}
			whence $A_{\overline{f}} [\varphi](0)>0$. The claim then follows as in \cite[Theorem 1]{crandall} and thanks to \eqref{identita1}.
		\end{proof}
	\textbf{Step 3.}
		Now we want to exploit $\Psi_\epsilon$ as a test function in the definition of absolute minimizer on a suitable neighbourhood of $0$. For doing this let us notice that, thanks to \eqref{phiepsi},
		\begin{equation*}
			\begin{split}
				\Psi_\epsilon(x)&=\Psi_\epsilon(0)+D\Psi_\epsilon(0)\cdot x+x^T\cdot D^2\Psi_\epsilon(0)\cdot x + o(|x|^2)\\
				&=\phi(0)-\epsilon+D\phi(0)\cdot x+x^T\cdot D^2\Psi_\epsilon(0)\cdot x + o(|x|^2)\\
				&> \phi(0)-\epsilon+D\phi(0)\cdot x+x^T\cdot D^2\phi(0)\cdot x +2\mu|x|^2+ o(|x|^2)\\
				&=\phi(x)-\epsilon+2\mu|x|^2+ o(|x|^2)
			\end{split}
		\end{equation*}
		as $x$ goes to zero. Therefore we have that
		\begin{equation}\label{stimataylor}
			\Psi_\epsilon(x)>\phi(x)-\epsilon+\mu|x|^2
		\end{equation}
		for any $x\in \overline{B_{R_3}(0)}\setminus\{0\}$, for any $\epsilon\in[0,\epsilon_1]$ and for some $R_3<R_2$ sufficiently small. Let now $0<\epsilon_2<\epsilon_1$ small enough such that $\sqrt{\frac{\epsilon}{\mu}}<R_3$ for any $\epsilon\in[0,\epsilon_2]$ and define $\mathcal{N}_\epsilon$ as the connected component of
		$$\{x\in B_{R_3}(0)\,:\,\Psi_\epsilon(x)<u(x)\}$$
		containing zero (note that $\Psi_\epsilon(0)=u(0)-\epsilon<u(0)$ if $\epsilon>0$). Therefore $\mathcal{N}_\epsilon$ is an open and connected neighborhood of $0$ for any $\epsilon\in(0,\epsilon_2]$. Moreover, since \eqref{stimataylor} implies that
		\begin{equation*}
			\Psi_\epsilon(x)>\phi(x)\geq u(x)\qquad\text{on }\partial B_{\sqrt{\frac{\epsilon}{\mu}}}(0),
		\end{equation*}
		it follows that 
		\begin{equation}\label{contradue}
			\mathcal{N}_\epsilon\subseteq B_{\sqrt{\frac{\epsilon}{\mu}}}(0)\subsetneqq B_{R_3}(0),
		\end{equation}
		which implies that
		\begin{equation*}
			u|_{\partial\mathcal{N}_\epsilon}=\Psi_\epsilon|_{\partial\mathcal{N}_\epsilon}.
		\end{equation*}
		Being $u$ an absolute minimizer, and recalling \eqref{phiepsi}, we conclude that
		\begin{equation}\label{disug1}
			\begin{split}
				f(x,u(x),Xu(x))&\leq F(u,\mathcal{N}_\epsilon)\leq F(\Psi_\epsilon,\mathcal{N}_\epsilon)=f(0,\phi(0)-\epsilon,X\phi(0))=f(x,\Psi_\epsilon(x),X\Psi_\epsilon(x))
			\end{split}
		\end{equation}
		for a.e. $x\in\mathcal{N}_\epsilon$ and for any $\epsilon\in[0,\epsilon_2]$.\\
		
		\textbf{Step 4.} Got to this point we wish to achive the situation in which $s\mapsto f(x,s,p)$ is non-decreasing locally in a 
		neighborhood of $(0,\phi(0),X\phi(0))$. Therefore we follow the strategy of \cite{crandall} and we show that, via a suitable change of variables, this assumption is possible. Let us define then a new function $g$ as 
		\begin{equation*}
			g(x,s,p):=f(x,u(0)+q\cdot x+G(s),q\cdot C(0)^T+G'(s)p)
		\end{equation*}
		for any $(x,s,p)$ in a suitable neighborhood of $(0,\phi(0),X\phi(0))$, where $q\in\Rn$ has to be determined and $G\in C^\infty(-\delta,\delta)$ is a local increasing diffeomorphism such that $G(0)=0$ and $G'(0)>0$. Let us notice that $g$  is $C^2$ and quasiconvex in the third argument.
		Moreover, if we define $\overline{u}$ and $\overline{\phi}$ in a neighborhood of $0$ by requiring that
		\begin{equation*}
			u(x)=u(0)+q\cdot x+G(\overline{u}(x)),
		\end{equation*}
		\begin{equation}\label{phibarra}
			\phi(x)=\phi(0)+q\cdot x+G(\overline{\phi}(x)),
		\end{equation}
		it is easy to see that \eqref{maximumvisco} holds for $\overline{u}$ and $\overline{\phi}$ and that $\overline{\phi}(0)=\overline{u}(0)=0$.
		If $H$ is the supremal functional associated to $g$ it is easy to see that $\overline{u}$ is an absolute minimizer for $H$ (we stress that we are working in a suitable neighborhood of $0$).
		Easy computations show that
		\begin{equation*}
			D_x g=D_xf+D_sf q,\qquad D_sg=G'(s)D_sf+G''(s)D_pf\cdot p^T,\qquad D_pg=G'(s)D_pf.
		\end{equation*}
		Therefore, noticing that 
		\begin{equation*}
			g(x,\overline{\phi}(x),X\overline{\phi}(x))=f(x,\phi(x),X\phi(x))
		\end{equation*}
		for any $x$ in the usual neighborhood of $0$, we have that
		\begin{equation*}
			\begin{split}
				A_g[\overline\phi](x)&=-X(g(x,\overline{\phi}(x),X\overline{\phi}(x)))\cdot D_pg(x,\overline{\phi}(x),X\overline{\phi}(x))\\
				&=-X(f(x,\phi(x),X\phi(x)))\cdot D_pg(x,\overline{\phi}(x),X\overline{\phi}(x))\\
				&=-X(f(x,\phi(x),X\phi(x)))\cdot(G'(\overline{\phi}(x))D_pf(x,\phi(x),X\phi(x))=G'(\overline{\phi}(x))A_f[\phi](x),
			\end{split}
		\end{equation*}
		and so $A_g[\overline{\phi}](0)=G'(0)A_f[\phi](0)>0$.
		Moreover, \eqref{phibarra} implies that
		\begin{equation*}
			X\overline{\phi}(0)=\frac{X\phi(0)-q\cdot C(0)^T}{G'(0)}.
		\end{equation*}
		Therefore we have that
		\begin{equation*}
			D_s g(0,\overline{\phi}(0),X\overline{\phi}(0))=G'(0)D_sf(0,\phi(0),X\phi(0))+\frac{G''(0)}{G'(0)}(X\phi(0)-q\cdot C(0)^T)\cdot D_pf(0,\phi(0),X\phi(0))^T.
		\end{equation*}
		Hence, if we choose $G$ as $G(s)=s+\frac{\beta}{2}s^2$, where $\beta>0$, and we choose $q$ as
		\begin{equation*}
			q:=D\phi(0)+D_xf(0,\phi(0),X\phi(0))+D_sf(0,\phi(0),X\phi(0))D\phi(0)+D_pf(0,\phi(0),X\phi(0))\cdot B,
		\end{equation*}
		where $B$ is the $m\times n$ matrix defined as 
		\begin{equation*}
			B_{ij}:=\frac{\partial}{\partial x_j}X_i\phi(x)\bigg|_{x=0}
		\end{equation*}
		for any $i=1,\ldots,m$ and $j=1,\ldots,n$, and noticing that
		\begin{equation*}
			p\cdot B\cdot C(0)^T\cdot p^T=p\cdot X^2\phi(0)\cdot p^T
		\end{equation*}
		for any $p\in\Rm$, thanks to \eqref{contradiction} we conclude that
		\begin{equation*}
			D_s g(0,\overline{\phi}(0),X\overline{\phi}(0))=D_sf(0,\phi(0),X\phi(0))+\beta A_f[\phi](0)>0,
		\end{equation*}
		provided we choose $\beta$ sufficiently big. Therefore, up to work in this new setting, we can assume that $s\mapsto f(x,s,p)$ is increasing in a neighborhood of $(0,\phi(0),X\phi(0))$. This fact and \eqref{disug1} allow to find $0<\epsilon_3<\epsilon_2$ such that
		\begin{equation}\label{disug2}
			f(x,u(x),Xu(x))\leq f(x,u(x),X\Psi_\epsilon(x))
		\end{equation}
		for any $\epsilon\in(0,\epsilon_3]$ and for a.e. $x\in\mathcal{N}_\epsilon$.\\
		\textbf{Step 5.}
		We are going to exploit \eqref{disug2}, together with Proposition \ref{quasiconvex}, in a suitable way. For doing this let us consider the first-order system of ODEs
		
		\begin{equation}\label{ode}
			\begin{cases}
				\Dot{\gamma}(t)=-C(\gamma(t))^T\cdot D_pf(\gamma(t),u(\gamma(t),X\Psi_\epsilon(\gamma(t)))^T \\ \gamma(0)=0
			\end{cases}
		\end{equation}
		and, for any $\epsilon\in[0,\epsilon_3]$ and a suitable $R_4<R_3$, we define $g_\epsilon:B_{R_4}(0)\scu\Rn$ as
		\begin{equation*}
			g_\epsilon(x):=-C(x)^T\cdot D_pf(x,u(x),X\Psi_\epsilon(x))^T.
		\end{equation*}
		It is easy to see (recall \eqref{continuita}) that $g_\epsilon\in C(B_{R_4}(0),\Rn)$. If we define $$\mathcal{C}:=\max_{i,j}\{\sup_{B_{R_4}(0)}|c_{ij}|\},$$
		it follows from our assumptions that $0<\mathcal{C}<+\infty$. Moreover, thanks to \eqref{continuita} and \eqref{peano1}, there exist $0<\epsilon_4<\epsilon_3$ and $0<R_5<R_4$ such that 
		\begin{equation*}
			\begin{aligned}
				|D\Psi_\epsilon(x)-D\phi(0)|\leq 1\\
				|u(x)-u(0)|\leq 1
			\end{aligned}
		\end{equation*}
		for any $x\in \overline{B_{R_5}(0)}$ and $\epsilon\in[0,\epsilon_4]$. 
		Therefore, if we let $M_\epsilon:=\max\{g_\epsilon(x)\,:\,x\in \overline{B_{R_5}(0)}\}$, it follows that
		\begin{equation*}
			\begin{split}
				\Vert g_\epsilon(x)\Vert_{L^\infty({B_{R_5}(0))}}&\leq\mathcal{C}
				\|D_pf(x,u(x),X\Psi_\epsilon(x))\|_{L^\infty(B_{R_5}(0))}\\
				&\leq\mathcal{C}\Vert D_pf(x,s,p)\Vert_{L^\infty (B_{R_5}(0)\times B_1(u(0))\times B_{\mathcal{C}}(D\phi(0))}:=M
			\end{split}
		\end{equation*}
		for any $\epsilon\in[0,\epsilon_4]$. Since \eqref{contradiction} implies that $M_\epsilon>0$, we conclude that $0<M_\epsilon<M$ for any $\epsilon\in[0,\epsilon_4]$.
		Therefore, if we let 
		$$\epsilon_5:=\min\left\{\epsilon_4,\frac{R_5}{M}\right\},$$
		Peano's Theorem (cf. e.g. \cite[Theorem 2.19]{teschl})
		guarantees the existence, for any $\epsilon\in[0,\epsilon_5]$, of a curve $\gamma_\epsilon\in C^1((-\epsilon_5,\epsilon_5),\Rn)$ which solves \eqref{ode}. Moreover, from the first line of \eqref{ode} it follows that $\gamma_\epsilon$ is an horizontal curve.
		Then, Propositions \ref{quasiconvex} and \ref{derivabilitaac}, together with Lemma \ref{sublevel} and \eqref{disug2}, imply that \begin{equation*}
			\frac{d}{dt}\left(\Psi_\epsilon(\gamma_\epsilon(t))-u(\gamma_\epsilon(t)\right)\Bigg|_{t=t_0}=D_pf(\gamma_\epsilon(t_0),u(\gamma_\epsilon(t_0)),X\Psi_\epsilon(\gamma(t_0)))\cdot(g(t_0)-X\Psi_\epsilon(\gamma(t_0)))\leq 0
		\end{equation*}
		for a.e. $t_0\in(-\epsilon_5,\epsilon_5)$ and for any $\epsilon\in[0,\epsilon_5)$, and where $g(t_0)$ is as in Proposition \ref{derivabilitaac}. Therefore, if we fix $t_0\in(0,\epsilon_5)$, the previous inequality implies that
		\begin{eqnarray*}
			\Psi_\epsilon(\gamma_\epsilon(t_0))&=&\Psi_\epsilon(0)+\int_0^{t_0}\frac{d\Psi_\epsilon(\gamma_\epsilon(t))}{dt}dt\\
			&\leq&u(0)-\epsilon+\int_0^{t_0}\frac{du(\gamma_\epsilon(t))}{dt}dt\\
			&=&u(\gamma_\epsilon(t_0))-\epsilon<u(\gamma_\epsilon(t_0)),
		\end{eqnarray*}
		hence we conclude that $\gamma_\epsilon(t_0)\in\mathcal{N}_\epsilon$, which implies, together with \eqref{contradue}, that
		\begin{equation}\label{contrafine}
			\gamma_\epsilon(t_0)\in B_{\sqrt{\frac{\epsilon}{\mu}}}(0)
		\end{equation}
		for any $t_0\in[0,\epsilon_5)$ and any $\epsilon\in(0,\epsilon_5)$.
		On the other hand, the classical Taylor's formula applied to $\gamma_\epsilon$ implies that
		\begin{equation}\label{taylorgamma}
			\gamma_\epsilon(t)=-C(0)^T\cdot(D_pf(0,\phi(0),X\phi(0))^T t+o(t)
		\end{equation}
		as $t$ tends to zero and for any $\epsilon\in(0,\epsilon_5)$. If we let $2K:=|C(0)^T\cdot(D_pf(0,\phi(0),X\phi(0))^T|$, \eqref{contradiction} says that $2K>0$. Therefore, thanks to \eqref{taylorgamma}, we know that there exists $0<\epsilon_6<\epsilon_5$ such that
		\begin{equation}\label{stimaa}
			|\gamma_\epsilon(t)|\geq Kt
		\end{equation}
		for any for any $t,\epsilon\in(0,\epsilon_6)$.
		Let us choose $\overline{\epsilon}\in(0,\epsilon_6)$ such that 
		$$t_0:=\frac{2}{K}\sqrt{\frac{\overline{\epsilon}}{\mu}}<\epsilon_6.$$
		Then \eqref{stimaa} yelds that $|\gamma_{\overline{\epsilon}}(t_0)|\geq2\sqrt{\frac{\overline{\epsilon}}{\mu}}$, which is a clear contradiction with \eqref{contrafine}.
		
	\end{proof}

	\section{Appendix}
	\begin{proof}[Proof of Lemma \ref{lemmauno}]
		Let $z\in \cl A_k$ for any $k\geq 1$. Then for any $k\geq 1$ there exists a sequence $(z^k_h)_h\subseteq A_k$ converging to $z$ as $h$ goes to infinity.
		Therefore we can select a subsequence $(z^k)^k\subseteq(z^k_h)_h^k$ which converges to $z$ as $k$ goes to infinity and such that $z^k\in A_k$ for any $k\geq 1$.
		SInce $z^k\in A_k$, then there exist $y^k\in B_{1/k}(x)\setminus N$ such that $Xu(y^k)=z^k$. It follows that $y^k$ converges to $x$ as $k$ goes to infinity, $y^k\notin N$ and
		\begin{equation*}
			z=\lim_{k\to\infty}z^k=\lim_{k\to\infty}Xu(y^k).
		\end{equation*}
		We conclude that $z\in S$.
	\end{proof}
	
	\begin{proof}[Proof of Lemma \ref{lemmadue}]
		Let $z\in \ch(\cl A_k)$ for any $k\geq 1$. Then for any $k\geq 1$ there exists a sequence $(z^k_h)_h\subseteq co(\cl A_k)$ converging to $z$ as $h$ goes to infinity.
		As in the previous proof, let $(z^k)^k\subseteq(z^k_h)_h^k$ be a sequence which converges to $z$ as $k$ goes to infinity and such that $z^k\in co(\cl A_k)$ for any $k\geq 1$.
		Therefore, for each $k\geq 1$, there exist $(\lambda^{k}_1\ldots,\lambda^{k}_{m+1})\in\Lambda_{m+1}$ and $y^{k}_1,\ldots,y^{k}_{m+1}$ belonging to $\cl A_k$ such that
		$$z^k=\sum_{j=1}^{m+1}\lambda^{k}_jy^{k}_j.$$
		Up to passing to subsequences, we assume that 
		$$\lambda^{k}_j\to\lambda_j\qquad\text{ as }k\to\infty$$
		and
		$$y^{k}_j\to y_j\qquad\text{ as }k\to\infty$$
		for any $j=1,\ldots,m+1$. It is easy to see that $(\lambda_1,\ldots,\lambda_{m+1})\in\Lambda_{m+1}$ and that $y_j$ belongs to $\cl A_k$ for any $k\geq 1$. Therefore, thanks to our hypotheses, we have that $y^h_j\in S$. If we set 
		$$x:=\sum_{j=1}^{m+1}\lambda_jy_j,$$
		then $x\in co( S)$. Moreover, it holds that
		\begin{equation*}
			x=\sum_{j=1}^{m+1}\lambda_jy_j=\sum_{j=1}^{m+1}\lim_{k\to\infty}\lambda_j^{k}y_j^{k}=\lim_{k\to\infty}\sum_{j=1}^{m+1}\lambda_j^{k}y_j^{k}=\lim_{k\to\infty}z^k=z,
		\end{equation*}
		which implies that $z\in co(S)$.
	\end{proof}

\end{document}